\newtheorem{theorem}{Theorem}
\newtheorem{proposition}{Proposition}
\newtheorem{lemma}{Lemma}
\newtheorem{remark}{Remark}
\newcommand{\Cbb}{{\mathbb{C}}}
\newcommand{\Ebb}{{\mathbb{E}}}
\newcommand{\Nbb}{{\mathbb{N}}}
\newcommand{\Pbb}{{\mathbb{P}}}
\newcommand{\Qbb}{{\mathbb{Q}}}
\newcommand{\Rbb}{{\mathbb{R}}}
\newcommand{\Fcal}{{\mathcal{F}}}
\newcommand{\Hcal}{{\mathcal{H}}}
\newcommand{\nuest}{{\nu_\text{est}}}
\newcommand{\Qcal}{{\mathcal{Q}}}
\DeclareMathOperator{\argmin}{{\text{arg min}}}
\renewcommand{\geq}{\geqslant}
\renewcommand{\leq}{\leqslant}
\title{Deconvolution of repeated measurements corrupted by unknown noise}
\author{J\'er\'emie Capitao-Miniconi$^*$, \'Elisabeth Gassiat$^*$ and Luc Leh\'ericy$^{**}$\\
{\small $^*$ Universit\'e Paris-Saclay, CNRS, Laboratoire de math\'ematiques d'Orsay, 91405, Orsay, France,}\\
{\small $^{**}$ Universit\'e Côte d’Azur, CNRS, LJAD, France.}}
\date{}
\begin{document}
\maketitle

\begin{abstract}

Recent advances have demonstrated the possibility of solving the deconvolution problem without prior knowledge of the noise distribution. In this paper, we study the repeated measurements model, where information is derived from multiple measurements of $X$ perturbed independently by additive errors. Our contributions include establishing identifiability without any assumption on the noise except for coordinate independence. We propose an estimator of the density of the signal for which we provide rates of convergence, and prove that it reaches the minimax rate in the case where the support of the signal is compact. Additionally, we propose a model selection procedure for adaptive estimation. Numerical simulations demonstrate the effectiveness of our approach even with limited sample sizes.
\end{abstract}

\section{Introduction}


Density deconvolution is one of the classical topics in nonparametric statistics and has garnered significant attention over the past decades. The aim is to identify the density of some random variable $X$, the signal, which cannot be observed directly, but is contaminated by some 
additive error $\varepsilon$, the noise, independent of $X$.
%
Most of the literature on the deconvolution problem considers 
the situation where the distribution of the noise is perfectly known, see
\cite{MR0997599}, \cite{MR1126324},  \cite{MR1054861}, \cite{MR1045189},  for early references, see also the book \cite{MR2768576}.
However, knowledge of the noise distribution is unrealistic in practice. In some situations, it is possible to get a pure noise sample so that the noise distribution may be estimated separately and plugged into methods that assume the noise distribution known, see \cite{MR2543693}, \cite{MR1460203}.
It has been proved recently \cite{LCGL2021} that, under very mild assumptions, it is possible to solve the deconvolution problem without knowing the noise distribution and with no sample of the noise, for multivariate signals.

In the present work, we are interested in the case where information can be drawn from repeated measurements of 
$X$, so that the multivariate signal is the repetition of $X$. 
This framework is known as model of repeated measurements. The observations are 

\begin{equation*}
Y_{i,j} = X_j + \varepsilon_{i,j} \ ; \ i = 1, \ldots, p \ ; \ j = 1, \ldots, n
\end{equation*}
where the random variables $X_j$ and $\varepsilon_{i,j}$, $j=1,\ldots,p$, $i=1,\ldots,n$ are independently distributed.

By making substractions of the coordinates of the observations, it is possible to estimate the noise distribution consistently when it is assumed to be symmetric, and consequently estimate the signal density, see \cite{MR3207174}, \cite{MR2396811} and \cite{MR3299125}. The symmetry assumption can be dropped using Kotlarski's identity \cite{MR0203769}. Further works use this identity to propose estimation strategies not relying on the symmetry assumption, see \cite{MR1625869}, \cite{MR3372551} and \cite{MR4349899}. However, all these works require that the characteristic function of the signal and the characteristic function of the noise vanish nowhere. The identifiability result of \cite{MR2396948} relaxes this non vanishing assumption, although it still prevents the characteristic function from vanishing on open non empty sets.

The aim of our work is to prove that it is not needed to assume non vanishing of the characteristic functions to be able to estimate the signal and the noise distributions.
Our main contributions are as follows.

\begin{itemize}
    \item We first prove an identifiability result in general settings, see Theorem \ref{thm:id}. Our only assumption on the noise is that its coordinates are independently distributed. We do not assume anything on its characteristic function, which may vanish anywhere. Our assumption on the signal is that its Laplace transform is finite everywhere and grows at most as the exponential of a power function, and is equivalent to an assumption on the tails of the distribution of the signal. We also do not assume anything on its characteristic function.
    \item We propose an estimation procedure for the density of the signal  $X$ in $\Rbb^d$. We prove in Theorem \ref{thm:density} that the integrated quadratic risk of our estimator 
    is upper bounded by $(\log\log(n)/\log(n))^{2\beta /\rho}$ where $\beta$ is a regularity parameter and $\rho \geq 1$ is a parameter that depends on the tail of the distribution ($\rho=1$ corresponds to a compactly supported distribution, and $\rho = 2$ to a sub-Gaussian distribution). In the compact case ($\rho = 1$), we show in Theorem \ref{thm:lowerbound} that this rate is minimax.
    \item We propose a model selection procedure to obtain an adaptive estimator with the same rate of convergence as the estimator with a known tail parameter $\rho$, see Theorem \ref{thm:adaptivity}. 
    To cover situations in which the estimation rate may be improved when the characteristic function of the noise does not vanish and is ordinary smooth,
    we construct a data-driven combination of our estimator $\widehat{f}$ and of the estimator $\widehat{f}^{CK}$ built in \cite{MR3372551}. We prove in Theorem \ref{thm:adaptnoise} that this combination achieves the best rate of convergence among the two, 
    even if we do not know that the noise is ordinary smooth.
    \item Finally, we present numerical simulations in Section \ref{sec:simu}. In various settings for the signal distribution and for the noise distribution, we find that our estimator has surprisingly good behaviour even with a small sample size ($n=500$). We compare our results with the experiments of \cite{MR3372551} and find that in most of the examples, our estimator outperforms the one from \cite{MR3372551}. Moreover, our simulations indicate that our procedure performs well even when the tails of the distribution of the signal are too heavy for our theoretical results to apply.
    Finally, we discuss the choice of the hyperparameters involved in our procedure and propose a data-driven method to select them.
\end{itemize}
Possible further works are discussed in Section \ref{sec:discussion}.
Detailed proofs can be found in Section \ref{sec:proofs}.


\section{Setting and identifiability Theorem}
\label{subsec:prelim}

Consider the repeated measurements model with 
2 repetitions:
\begin{equation}
\label{eq:model}
    Y = \begin{pmatrix} Y^{(1)}\\ Y^{(2)}\end{pmatrix} = \begin{pmatrix} X \\ X \end{pmatrix} + \begin{pmatrix} \varepsilon^{(1)} \\ \varepsilon^{(2)} \end{pmatrix}= r(X)+\varepsilon,
\end{equation}
in which, for $i \in \lbrace 1, 2 \rbrace$, 
$Y^{(i)}, X, \varepsilon^{(i)} \in \Rbb^d$, and $r : x \in \Rbb^d \mapsto (x,x) \in \Rbb^d \times \Rbb^d$. We assume that the random variables $X$ and $\varepsilon$ are independent, 
and we consider independent and identically distributed observations $Y_{j}$, $j=1,\ldots,n$, following model~\eqref{eq:model}. 

\medskip

We shall not assume that the distribution of the noise $\varepsilon$ is known, instead the only assumption we will make on the noise is the following.
\begin{itemize}
\item[\textbf{(H1)}]  $\varepsilon^{(1)}$ and $\varepsilon^{(2)}$ are independent random variables.
\end{itemize}

Let us now introduce our assumption on the Laplace transform of $X$.
\begin{itemize}
\item[\textbf{(H2)}]
There exists $\rho > 0$, $a>0$ and $b>0$ such that for all $\lambda \in \Rbb^d$,
$\Ebb \left[\exp \left(\lambda^\top X\right)\right]
\leq a \exp \left( b \| \lambda\|^\rho\right)$, where $\|\cdot\|$ denotes the Euclidean norm.
\end{itemize}
Note that by Chernoff's bound, this is equivalent to assuming that the tails of the distribution of $X$ satisfy $\Pbb(\|X\| \geq t) = O(\exp(c t^{1+1/(\rho-1)}))$ when $\rho > 1$, $X$ a.s. bounded when $\rho = 1$, and $X=0$ when $\rho < 1$.

Under \textbf{(H2)},
the characteristic function of the signal $X$ can be extended into a multivariate analytic function denoted by
\begin{eqnarray*}
\Phi_X : \Cbb^{d} & \longrightarrow& \Cbb\\
z &\longmapsto& \Ebb \left[ \exp \left(i z^\top X \right)\right]
\end{eqnarray*}

Obviously, if no centering constraint is put on the signal or on the noise, it is possible to translate the signal by a fixed vector $m \in \Rbb^d$ and the noise by $-m$ without changing the observation. The model can thus be identifiable only up to translation. We prove the following identifiability theorem.

\begin{theorem}
\label{thm:id}
Assume that $X$ and $X'$ satisfy \textbf{(H2)} and $\varepsilon \sim \Qbb$, $\tilde{\varepsilon} \sim \tilde{\Qbb}$ satisfy \textbf{(H1)}. Then $\Pbb_{r(X)} * \Qbb = \Pbb_{r(X')} * \tilde{\Qbb}$ implies $\Pbb_{X} = \Pbb_{X'}$ and $\Qbb = \tilde{\Qbb}$ up to translation.
\end{theorem}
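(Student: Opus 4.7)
The plan is to work entirely with characteristic functions. By \textbf{(H1)} we have
\[
\Phi_Y(t_1,t_2) = \Phi_X(t_1+t_2)\,\Phi_{\varepsilon^{(1)}}(t_1)\,\Phi_{\varepsilon^{(2)}}(t_2),
\]
and the analogous identity for $(X',\tilde\varepsilon)$ has the same left-hand side by hypothesis. Setting $t_2 = 0$ and then $t_1 = 0$ yields the two marginal relations $\Phi_X\,\Phi_{\varepsilon^{(i)}} = \Phi_{X'}\,\Phi_{\tilde\varepsilon^{(i)}}$ for $i = 1,2$. On a neighbourhood $U$ of the origin in $\Rbb^{2d}$, continuity guarantees that all six characteristic functions are nonzero, so we may divide; introducing $\psi := \Phi_X/\Phi_{X'}$, the three relations combine into Cauchy's multiplicative equation $\psi(t_1+t_2) = \psi(t_1)\,\psi(t_2)$ on $U$.

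Under \textbf{(H2)} both $\Phi_X$ and $\Phi_{X'}$ extend to entire functions on $\Cbb^d$, so $\psi$ is analytic near $0$ with $\psi(0) = 1$, and the multiplicative equation forces $\psi(t) = e^{c\cdot t}$ for some $c \in \Cbb^d$ (e.g.\ by inspecting the Taylor coefficients of $\log\psi$). Thus $\Phi_X(t) = e^{c\cdot t}\,\Phi_{X'}(t)$ on $U$, and by analytic continuation the identity extends to all of $\Cbb^d$. To pin down $c$, I would evaluate at $z = i\lambda$ with $\lambda \in \Rbb^d$: the values $\Phi_X(i\lambda) = \Ebb[e^{-\lambda\cdot X}]$ and $\Phi_{X'}(i\lambda)$ are positive real and finite by \textbf{(H2)}, whereas writing $c = a + ib$ gives $e^{ic\cdot\lambda} = e^{-b\cdot\lambda}\bigl(\cos(a\cdot\lambda) + i\sin(a\cdot\lambda)\bigr)$; requiring this to be positive real for every $\lambda$ forces $a\cdot\lambda = 0$ for all $\lambda$ by continuity, hence $a = 0$. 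Setting $m := b \in \Rbb^d$, we obtain $\Phi_X(t) = e^{i m\cdot t}\Phi_{X'}(t)$, i.e.\ $X$ and $X' + m$ have the same distribution.

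Plugging this back into the marginal relations yields $\Phi_{\tilde\varepsilon^{(i)}}(t) = e^{i m\cdot t}\Phi_{\varepsilon^{(i)}}(t)$ on the open dense set $\{\Phi_{X'} \neq 0\}$ (the zero set of a nonzero entire function has empty interior), and by continuity on all of $\Rbb^d$, so $\varepsilon^{(i)}$ has the same distribution as $\tilde\varepsilon^{(i)} - m$ for $i = 1,2$. Together with the translated signal, this is exactly identifiability up to translation. The main obstacle is the leap from the local identity $\psi = e^{c\cdot t}$, valid only where the six characteristic functions do not vanish, to a global one on $\Rbb^d$: this is precisely what the entireness provided by \textbf{(H2)} buys us, and the subsequent positivity of the Laplace transform on $i\Rbb^d$ is the second, subtler ingredient that forces $c$ to be purely imaginary, ensuring that the resulting relation is a genuine translation rather than some exponential re-weighting.
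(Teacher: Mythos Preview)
Your proof is correct. The overall strategy matches the paper's---derive the relation $\Phi_X(t_1+t_2)\Phi_{X'}(t_1)\Phi_{X'}(t_2)=\Phi_{X'}(t_1+t_2)\Phi_X(t_1)\Phi_X(t_2)$ on a real neighbourhood of $0$ from the three characteristic-function identities, then exploit the entireness provided by \textbf{(H2)}---but the key step is handled differently. The paper sets $t_2=-t_1$ to obtain $|\Phi_X|=|\Phi_{X'}|$ on $\Rbb^d$, passes to a polar decomposition $\Phi_X=Re^{i\Theta}$, $\Phi_{X'}=Re^{i\tilde\Theta}$, and shows that the (real-valued) phase difference $\Theta-\tilde\Theta$ satisfies an additive Cauchy equation near $0$; linearity is then obtained by citing an external result, and since the phase difference is real the coefficient vector $\alpha$ lies in $\Rbb^d$ automatically. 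You instead form the ratio $\psi=\Phi_X/\Phi_{X'}$, solve the multiplicative Cauchy equation directly via analyticity of $\log\psi$ to get $\psi(t)=e^{c\cdot t}$ with $c\in\Cbb^d$ a priori, and then force $c\in i\Rbb^d$ by evaluating on $i\Rbb^d$ and using that both Laplace transforms $\Phi_X(i\lambda)=\Ebb[e^{-\lambda\cdot X}]$ and $\Phi_{X'}(i\lambda)$ are strictly positive---a second, elegant use of \textbf{(H2)}. Your route is self-contained (no external Cauchy-equation reference), and you also explicitly close the argument for the noise distributions via density of $\{\Phi_{X'}\neq 0\}$ in $\Rbb^d$, which the paper's proof leaves implicit.
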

The proof of Theorem \ref{thm:id} is detailed in Section \ref{proof:thm:id}. It may be seen as starting similarly as the proof of Theorem 2.1 in \cite{LCGL2021} and then taking into account the particular form of the characteristic function of the observations in model \eqref{eq:model}.

Our result improves on earlier results concerning the assumption on the noise distribution. Indeed, the identifiability result of \cite{MR0203769} (further used in \cite{MR1625869},  \cite{MR3372551} and \cite{MR4349899}) requires that the characteristic functions of both the noise and the signal vanish nowhere.
Our approach differs from the identifiability Lemma 2.1 of \cite{MR2396948}
in the sense that they do not make any assumption on the tails of the distribution of the signal but use some intricate assumption on the non zero sets of the characteristic functions of the noise and the signal, which excludes noise characteristic functions vanishing on non empty open subsets of $\mathbb R$. 


\section{Estimation procedure}

From now on, we assume that the distribution $\Pbb_X$ of the signal has a density $f$ with respect to the Lebesgue measure on $\Rbb^d$. We shall assume that $X$ satisfies \textbf{(H2)}, and we also assume that an upper bound $\rho_0$ is known on $\rho$, where $\rho$ is given by \textbf{(H2)}. 

\bigskip
 
The first step in the estimation procedure is the estimation of the characteristic function of the signal by a method inspired by the proof of the identifiability theorem. 
A key step in the proof of Theorem \ref{thm:id} is the fact that, if a multivariate analytic  function $\phi$ has growth as in \textbf{(H2)}, is such that $\phi(0)=1$, for all $t\in\Rbb^{d}$,
$\overline{\phi(t)}=\phi(-t)$, and satisfies, for all $t_1$, 
$t_2$, in  a neighborhood of $0$ in $\Rbb^{d}$,
\begin{equation*}
\phi(t_1+t_2) \Phi_X(t_1) \Phi_X(t_2)=\Phi_X(t_1 + t_2) \phi(t_1) \phi(t_2),
\end{equation*}
then $\phi=\Phi_X$ up to translation, that is up to multiplication by a factor $\exp(i c^\top t)$ for some constant vector $c \in \Rbb^d$. 
In other words, if we define, for $\nu > 0$,
\begin{multline*}
M(\phi; \nu \vert \Phi_X) = \int_{[-\nu,\nu]^d \times [-\nu,\nu]^d} | \phi(t_1+t_2) \Phi_X(t_1) \Phi_X(t_2)- \Phi_X(t_1 + t_2) \phi(t_1) \phi(t_2) |^2 \\
|\Phi_{\varepsilon^{(1)}}(t_1)\Phi_{\varepsilon^{(2)}}(t_2)  |^2 d t_1 d t_2,
\end{multline*}
then $\Phi_X$ is the only minimizer (up to translation) of $M(\cdot; \nu \vert \Phi_X)$ over a well chosen set of multivariate analytic functions. We will construct an estimator of the criterion $M(\cdot; \nu \vert \Phi_X)$ based on the observations and minimize it to get an estimator of the characteristic function of the signal. Let us now describe the details of this procedure. 

For any $S>0$, let $\Upsilon_{\rho,S}$ be the subset of multivariate analytic functions from $\Cbb^{d}$ to $\Cbb$ defined as follows.
\begin{multline*}
\Upsilon_{\rho,S} = \Bigg \{
\phi \text{ analytic } \text{s.t. } \forall z\in\Rbb^{d},\overline{\phi(z)}=\phi(-z), \phi(0) = 1 \\
\text{ and } \forall j \in \Nbb^d \setminus \{0\},
\left| \frac{\partial^{j} \phi(0)}{j! }\right| \leq \frac{S^{\|j\|_1}}{(\|j\|_1)^{ \|j\|_1/\rho}} \Bigg \},
\end{multline*}
where $j! = \prod_{a = 1}^d j_a!$ and $\partial^j = \partial^{j_1}_1 \ldots \partial^{j_{d}}_d$.
By Lemma 3.1 in \cite{LCGL2021}, the family of sets $\Upsilon_{\rho,S}$, $S>0$, summarizes Assumption \textbf{(H2)} with parameter $\rho$, so that for large enough $S$, $\Phi_X$ is the only minimizer (up to translation) of $M(\cdot; \nu \vert \Phi_X)$ over $\Upsilon_{\rho,S}$.
Fix some constant $\nu_{\text{est}}>0$ and define $M_n$ as, for all $\phi \in \Upsilon_{\rho,S,d}$,

\begin{equation*}
M_{n}(\phi) = \int_{[-\nu_{\text{est}},\nu_{\text{est}}]^d \times [-\nu_{\text{est}},\nu_{\text{est}}]^d} | \phi(t_1 + t_2) \tilde\phi_{n}(t_1,0) \tilde\phi_{n}(0,t_2) - \tilde \phi_{n}(t_1,t_2) \phi(t_1) \phi(t_2) |^2 d t_1 d t_2 ,
\end{equation*}
where for all $(t_1,t_2)\in\Cbb^d \times \Cbb^d$,
\begin{equation*}
\tilde \phi_{n}(t_1,t_2) = \frac{1}{n}\sum_{\ell=1}^{n} \exp\left\{it_1^\top Y_{\ell}^{(1)} + it_2^\top Y_{\ell}^{(2)}\right\}.
\end{equation*}
As $n$ tends to infinity, $M_{n}(\phi)$ converges a.s. to $M(\phi; \nu_{\text{est}} \vert \Phi_X)$. We shall minimize $M_n$ over multivariate polynomials. 
We thus introduce, for all $m \in \Nbb$, the set  $\Cbb_m[X_1, \ldots, X_d]$  of multivariate polynomials in $d$ variables with total degree at most $m$ and coefficients in $\Cbb$.
For any integer $m$ and real $\rho > 0$, define $\widehat \Phi_{n,m,\rho}$ as a (up to $1/n$) measurable minimizer of the functional $ \phi \mapsto M_n(\phi)$ over $\Cbb_m[X_1, \ldots, X_d]\cap \Upsilon_{\rho,S,d}$, that is, 
\begin{equation*}
    \widehat \Phi_{n,m,\rho}\in \Cbb_m[X_1, \ldots, X_d]\cap \Upsilon_{\rho,S,d}
\end{equation*}
and
\begin{equation*}
    \forall \phi\in\Cbb_m[X_1, \ldots, X_d]\cap \Upsilon_{\rho,S,d},\;
M_n(\widehat \Phi_{n,m,\rho})\leq M_n(\phi)+\frac{1}{n}.
\end{equation*}
We shall prove in Proposition \ref{prop:rate:char} that for well chosen $m$ and small enough $\nu$, $\widehat \Phi_{n,m,\rho}$ converges to $\Phi_X$ in $L^2([-\nu,\nu])$ at almost parametric rate--when $m$ is well chosen--, uniformly over $\rho$ for $\rho$ in the compact set $[1,\rho_0]$. 
The well chosen $m$ will be set to $m=\left \lceil 2\rho_0\frac{\log(n)}{\log \log(n)} \right \rceil$, where $\rho_0$ is the {\it{a priori}} upper bound on $\rho$. For the sake of simplicity, we denote $\widehat \Phi_{n,\rho}$ the estimator $\widehat \Phi_{n,m,\rho}$ in which $m=\left \lceil 2\rho_0\frac{\log(n)}{\log \log(n)} \right \rceil$, which is a valid choice of $m$ to get this uniform almost parametric convergence rate, see Proposition \ref{prop:rate:char} below.

To define our estimator of the density of $X$, we truncate the polynomial expansion further. Let us introduce the truncation operator $T_m$ as follows. 
If $\phi$ is a multivariate analytic function defined in a neighborhood of $0$ in $\Cbb^D$ written as $ \phi : x \mapsto \sum_{(i_1, \ldots, i_d) \in \Nbb^d} c_i \prod_{a=1}^d x_a^{i_a}$, define 
\begin{equation*}
T_m \phi : x \mapsto \sum_{(i_1, \ldots, i_D) \in \Nbb^D :i_{1}+\ldots+i_{d} \leq m} c_i \prod_{a=1}^D x_a^{i_a}.
\end{equation*}
We finally define the estimator of the density of the signal as follows. Fix some integer $m_{n,\rho}$ and positive real number $h_{n,\rho}$. Then for all $t \in \Rbb^d$,
\begin{equation}
\label{eq_inversion_Fourier}
    \widehat{f}_{n,\rho}(t) = \frac{1}{(2\pi)^d} \int_{[-h_{n,\rho},h_{n,\rho}]^d} \exp(-it^\top u)  \ T_{m_{n,\rho}} \widehat \Phi_{n,\rho}(u) du.
\end{equation}
We prove in Theorem~\ref{thm:density} that a good choice of $m_{n,\rho}$ and $h_{n,\rho}$ allows to control the integrated quadratic risk over regularity classes of densities, and construct an estimator that is adaptive in $\rho$ in Theorem~\ref{thm:adaptivity}. The rates are shown to be minimax optimal for compactly supported signals in Theorem~\ref{thm:lowerbound}.

\section{Rates of convergence}


The first step to control the quadratic risk of the estimated density is to control the quadratic risk of the estimator of the characteristic function over some small set in $\Rbb^d$.
The constants in the proposition below depend on the signal through $\rho$ and $S$, and on the noise through its second moment and the quantity
\begin{equation}
\label{eq:cnu}
c_{\nu}=\inf\{|\Phi_{\varepsilon^{(i)}}(t)|,\;t\in [-\nu,\nu]^d,\;i=1,2\},
\end{equation}
provided it is positive, which holds for any noise distribution for small enough $\nu$ by continuity of the characteristic function.
For any $\nu>0$, $c(\nu ) >0$, $E>0$, define $\Qcal^{(2d)} (\nu,c({\nu}),E)$ the set of distributions $\Qbb = \otimes_{j=1}^2 \Qbb_j$ on $\Rbb^{2d}$ such that $c_{\nu}\geq c(\nu)$ and $\int_{\Rbb^{2d}} \|x\|^2 d\Qbb(x)\leq E$.


\begin{proposition}
\label{prop:rate:char}
For all $\nu\in(0,\nuest]$ and $\rho_0 \geq 1$, $S,c(\nu),E>0$ and $\delta, \delta',\delta'' \in (0,1)$ with $\delta' > \delta$, there exist positive constants $c$ and $n_0$ such that the following holds. For any $\rho \in [1, \rho_0]$, for all $\Phi_{X} \in \Upsilon_{\rho,S,d}$ and $\Qbb \in \Qcal^{(2d)} (\nu,c({\nu}),E)$,
for all $n\geq n_0$ and $x \in [1, n^{1-\delta'}]$, with probability at least $1 - 2e^{-x}$,
\begin{equation*}
\sup_{\rho' \in [\rho, \rho_0]}
    \int_{[-\frac{\nu}{2},\frac{\nu}{2}]^d} |\widehat \Phi_{n,\rho'}(t) - \Phi_{X}(t)|^2 dt
        \leq c \left(\frac{x}{n^{1-\delta}}\right)^{1-\delta''}.
\end{equation*}
\end{proposition}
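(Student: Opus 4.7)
The plan is to proceed in the standard M-estimation style: first, approximate $\Phi_X$ by a polynomial competitor inside the constraint set; second, control $M_n$ uniformly around its population version $M(\cdot;\nuest|\Phi_X)$ via concentration; third, convert smallness of $M$ into $L^2$ closeness on $[-\nu/2,\nu/2]^d$ through a quantitative version of the identifiability argument of Theorem~\ref{thm:id}. Uniformity over $\rho'\in[\rho,\rho_0]$ will follow from the nesting $\Upsilon_{\rho,S,d}\subseteq\Upsilon_{\rho',S,d}\subseteq\Upsilon_{\rho_0,S,d}$.

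\emph{Approximation step.} First I would check that the truncated Taylor polynomial $T_m\Phi_X$ belongs to $\Upsilon_{\rho,S,d}\cap\Cbb_m[X_1,\ldots,X_d]$: its Taylor coefficients up to degree $m$ are those of $\Phi_X$, hence satisfy the prescribed bound; its higher coefficients vanish; and the conjugation symmetry is preserved. It is therefore an admissible competitor for every $\rho'\geq\rho$, so the near-optimality of $\widehat\Phi_{n,\rho'}$ gives $M_n(\widehat\Phi_{n,\rho'})\leq M_n(T_m\Phi_X)+1/n$ uniformly in $\rho'$. A Taylor remainder estimate on $[-\nuest,\nuest]^d$, using the coefficient bound in $\Upsilon_{\rho,S,d}$ together with $m\geq 2\rho_0\log(n)/\log\log(n)$, shows that $\sup_{[-\nuest,\nuest]^d}|T_m\Phi_X-\Phi_X|$ decays faster than any negative power of $n$.

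\emph{Concentration step.} Using Hoeffding's inequality on a sufficiently fine covering of $[-\nuest,\nuest]^{2d}$ combined with the Lipschitz control on $\tilde\phi_n-\Phi_Y$ (with Lipschitz constant controlled by the second moment $E$), I would obtain, with probability at least $1-e^{-x}$,
\begin{equation*}
\sup_{(t_1,t_2)\in[-\nuest,\nuest]^{2d}}|\tilde\phi_n(t_1,t_2)-\Phi_Y(t_1,t_2)|\leq C\sqrt{\tfrac{x+\log n}{n}}.
\end{equation*}
Combined with uniform sup-norm bounds on $\phi\in\Upsilon_{\rho_0,S,d}\cap\Cbb_m[X_1,\ldots,X_d]$ on $[-\nuest,\nuest]^d$ (obtained by summing the coefficient bounds; the total is at most polynomial in $n$ for $m\sim\log n/\log\log n$), this gives uniform control on $|M_n(\phi)-M(\phi;\nuest|\Phi_X)|$ of comparable order up to polynomial-in-$\log n$ factors. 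Together with the approximation step, this yields $M(\widehat\Phi_{n,\rho'};\nuest|\Phi_X)\lesssim x/n^{1-\delta/2}$ with probability at least $1-2e^{-x}$, uniformly in $\rho'\in[\rho,\rho_0]$.

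\emph{Quantitative identifiability (main obstacle).} The heart of the proof is the stability estimate
\begin{equation*}
\int_{[-\nu/2,\nu/2]^d}|\phi(t)-\Phi_X(t)|^2\,dt\leq c\,M(\phi;\nuest|\Phi_X)^{1-\delta''}
\end{equation*}
valid uniformly over $\phi\in\Upsilon_{\rho_0,S,d}\cap\Cbb_m[X_1,\ldots,X_d]$, with the translation ambiguity fixed by an appropriate centering convention. The argument follows the blueprint of Theorem~\ref{thm:id}: the lower bound $c_\nu\geq c(\nu)>0$ on $[-\nu,\nu]^d$ converts smallness of $M$ into smallness of $\int_{[-\nu,\nu]^{2d}}|\phi(t_1+t_2)\Phi_X(t_1)\Phi_X(t_2)-\Phi_X(t_1+t_2)\phi(t_1)\phi(t_2)|^2\,dt_1 dt_2$; analyticity with uniformly bounded derivatives upgrades this to a sup-norm bound on a shrunken box; finally, the near-vanishing of the resulting multiplicative cocycle is inverted by matching Taylor coefficients about the origin (using $\phi(0)=\Phi_X(0)=1$), picking up loss factors polynomial in the degree $m\sim\log n/\log\log n$. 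These $\log n$-power losses, together with the logarithmic factors from concentration, are precisely what gets absorbed by the $n^\delta$ slack in the denominator and the slight reduction of exponent from $1$ to $1-\delta''$ in the final rate.
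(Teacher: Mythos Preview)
Your three-step M-estimation outline (approximation, concentration, stability) is the right high-level skeleton, and the first two steps are essentially correct. The paper, however, does not carry out this argument from scratch: its proof simply reduces to Proposition~6 of \cite{InferSupport}, applied with the structural class $\Hcal=\{f:(t_1,t_2)\mapsto F(t_1+t_2)\}$, after observing (i) that Theorem~\ref{thm:id} supplies the required identifiability on $\Hcal$ without any upper bound on $\rho_0$, (ii) that the restriction of $\Hcal$ to $\Rbb^{2d}$ is closed in $L^2([-\nu,\nu]^{2d})$, and (iii) that the one-variable norm is controlled by the two-variable one via the change-of-variables inequality
\[
\int_{[-\nu/2,\nu/2]^d}|\widehat\Phi_{n,\rho'}(t)-\Phi_X(t)|^2\,dt\;\leq\;\frac{1}{(2\nu)^d}\int_{[-\nu,\nu]^{2d}}|\widehat\Phi_{n,\rho'}(t_1+t_2)-\Phi_X(t_1+t_2)|^2\,dt_1\,dt_2.
\]
So what you sketch is effectively the content of the cited external result, not a different route.

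The genuine weak point in your proposal is the quantitative identifiability step. You propose to upgrade the exact-zero argument of Theorem~\ref{thm:id} to a stability estimate by ``matching Taylor coefficients about the origin'' and absorbing polynomial-in-$m$ losses. This is not how the exponent $1-\delta''$ arises: a direct coefficient-by-coefficient inversion of the near-cocycle relation $\phi(t_1+t_2)\Phi_X(t_1)\Phi_X(t_2)\approx\Phi_X(t_1+t_2)\phi(t_1)\phi(t_2)$ is ill-conditioned and typically costs factors that are exponential in the degree, not merely polynomial. The argument in the referenced work instead exploits compactness of $\Upsilon_{\rho_0,S,d}$ (the coefficient bounds make it pre-compact in $L^2$ on compacts) together with continuity of $M(\cdot;\nuest|\Phi_X)$ and the identifiability theorem; the sub-unity exponent $1-\delta''$ is precisely the price of extracting a quantitative modulus from this non-constructive compactness/continuity argument. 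Your sketch also leaves the translation ambiguity unhandled: $M(\cdot;\nuest|\Phi_X)$ is invariant under multiplication of $\phi$ by $e^{i\alpha^\top t}$, so ``an appropriate centering convention'' needs to be named and shown to be respected by the near-minimizer, something the paper delegates to the cited framework rather than resolving in the body of the proof.
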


The proof of Proposition \ref{prop:rate:char} is adapted from the proof of Proposition 2 of \cite{InferSupport} and is detailed in Section \ref{proof:thm:char}. 


\subsection{Upper bound}

The aim of this section is to give an upper bound of the  maximum $L_2 (\Rbb^d)$-risk for the estimation of $f$. We shall denote $\| \cdot \|_{2}$ the norm in $L_2 (\Rbb^d)$.
For all $\rho \geq 1$, $\beta > 0$, $S > 0$, $c_{\beta}>0$, we denote $\Psi(\rho,S,\beta,c_{\beta})$ the set of distributions $\Pbb_X$ with a density $f$ on $\Rbb^d$ such that the characteristic function $\Phi_X$ is in $\Upsilon_{S,\rho,d}$ and satisfies 
\begin{equation*}
    \int_{\Rbb^d} |\Phi_X(u)|^2 (1 + \|u\|^2)^{\beta} du \leq c_{\beta}.
\end{equation*}

\begin{theorem}
\label{thm:density}
For $c_h \leq \exp(-(5d+3)/2)$, define for any $\rho\geq 1$ 
\begin{equation*}
    m_{n,\rho} = \Bigg \lfloor \frac{\rho}{4} \frac{ \log(n)}{\log  \log(n)} \Bigg \rfloor, \  \  \  h_{n,\rho} = c_h \frac{m_{n,\rho}^{1/\rho}}{S}.
\end{equation*}
Then for any 
$\nu\in(0,\nu_\text{est}]$, $c(\nu)>0$, $E>0$, $S>0$, $\beta > 0$ and $c_{\beta} > 0$, there exists $n_0$ and $C > 0$ such that for all $n \geq n_0$,
\begin{equation*}
\sup_{\rho \in[1,\rho_{0}]}
\underset{\Qbb\in \Qcal^{(2d)} (\nu,c({\nu}),E)}
        {\sup_{\Pbb_{X} \in \Psi(\rho,S,\beta,c_{\beta})}}
    \left ( \frac{\log(n)}{\log \log(n)} \right )^{\frac{2\beta}{\rho}}
    \Ebb_{(\Pbb_{r(X)} * \Qbb)^{\otimes n}}[\| \widehat{f}_{n,\rho} - f \|^2_{2}]
        \leq C.
\end{equation*}
\end{theorem}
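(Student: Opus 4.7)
The starting point is Plancherel's identity applied to the inverse Fourier transform in \eqref{eq_inversion_Fourier}: setting $g = \mathbf{1}_{[-h_{n,\rho},h_{n,\rho}]^d}\, T_{m_{n,\rho}} \widehat\Phi_{n,\rho}$, one has
\begin{equation*}
(2\pi)^d \|\widehat f_{n,\rho} - f\|_2^2
   = \|g - \Phi_X\|_{L^2(\Rbb^d)}^2
   = \int_{[-h_{n,\rho},h_{n,\rho}]^d} |T_{m_{n,\rho}} \widehat\Phi_{n,\rho} - \Phi_X|^2
   + \int_{\|u\|_\infty > h_{n,\rho}} |\Phi_X|^2.
\end{equation*}
The tail integral is immediately controlled by the Sobolev-type assumption: $\int_{\|u\|_\infty > h_{n,\rho}} |\Phi_X|^2 \leq c_\beta (1 + h_{n,\rho}^2)^{-\beta}$, and since $h_{n,\rho} \asymp (\log n/\log\log n)^{1/\rho}/S$, this is of order $(\log\log n/\log n)^{2\beta/\rho}$, which is already the target rate. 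It therefore remains to bound the $L^2$ error on the box $[-h_{n,\rho},h_{n,\rho}]^d$ by the same order.

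I would split $T_{m_{n,\rho}} \widehat\Phi_{n,\rho} - \Phi_X = T_{m_{n,\rho}}(\widehat\Phi_{n,\rho} - \Phi_X) + (T_{m_{n,\rho}} - I)\Phi_X$ and treat the two pieces separately. The deterministic Taylor-remainder term $(T_{m_{n,\rho}} - I)\Phi_X$ is handled directly: since $\Phi_X \in \Upsilon_{\rho,S,d}$, its Taylor coefficients $c_j$ at the origin satisfy $|c_j| \leq S^{\|j\|_1}/\|j\|_1^{\|j\|_1/\rho}$, so for $u \in [-h_{n,\rho},h_{n,\rho}]^d$,
\begin{equation*}
|(T_{m_{n,\rho}} - I)\Phi_X(u)| \leq \sum_{k > m_{n,\rho}} \binom{k+d-1}{d-1} \frac{(S h_{n,\rho})^k}{k^{k/\rho}} = \sum_{k > m_{n,\rho}} \binom{k+d-1}{d-1} c_h^k (m_{n,\rho}/k)^{k/\rho}.
\end{equation*}
For $k > m_{n,\rho}$ the factor $(m_{n,\rho}/k)^{k/\rho}$ is $\leq 1$, and because $c_h \leq \exp(-(5d+3)/2) < 1$ the tail is $O(m_{n,\rho}^{d-1} c_h^{m_{n,\rho}})$. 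Squaring and integrating over a box of volume $\asymp h_{n,\rho}^d \asymp m_{n,\rho}^{d/\rho}$ yields a contribution of order $c_h^{2m_{n,\rho}}\, \mathrm{poly}(m_{n,\rho})$, super-polynomially smaller than the target rate.

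The main work is to control $\|T_{m_{n,\rho}}(\widehat\Phi_{n,\rho} - \Phi_X)\|_{L^2([-h_{n,\rho},h_{n,\rho}]^d)}$. I would first transfer the control from $\widehat\Phi_{n,\rho} - \Phi_X$ to its truncation on the small box $[-\nu/2,\nu/2]^d$: both functions lie in $\Upsilon_{\rho,S,d}$, so the same Taylor-tail estimate as above (with $\nu/2$ in place of $h_{n,\rho}$, which produces a geometric ratio strictly less than $1$ once $k$ exceeds a fixed threshold independent of $n$) shows that $(I - T_{m_{n,\rho}})(\widehat\Phi_{n,\rho} - \Phi_X)$ is exponentially small on $[-\nu/2,\nu/2]^d$. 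Next, $T_{m_{n,\rho}}(\widehat\Phi_{n,\rho} - \Phi_X)$ is a polynomial of degree $m_{n,\rho}$, so a multivariate Chebyshev-type extension inequality, obtained by tensorizing the one-dimensional bound, yields
\begin{equation*}
\|T_{m_{n,\rho}}(\widehat\Phi_{n,\rho} - \Phi_X)\|_{L^2([-h_{n,\rho},h_{n,\rho}]^d)}^2 \leq C_d (2 h_{n,\rho}/\nu)^{2 m_{n,\rho} + d} \|T_{m_{n,\rho}}(\widehat\Phi_{n,\rho} - \Phi_X)\|_{L^2([-\nu/2,\nu/2]^d)}^2.
\end{equation*}
With the specific choice $m_{n,\rho} = \lfloor (\rho/4) \log n/\log\log n\rfloor$ and $h_{n,\rho} \asymp m_{n,\rho}^{1/\rho}$, the extension factor is $\exp(O(m_{n,\rho} \log m_{n,\rho}/\rho)) \asymp n^{1/2}$, while the $L^2$-control on the small box from Proposition~\ref{prop:rate:char} is almost parametric, $O((x/n^{1-\delta})^{1-\delta''})$ on an event of probability at least $1 - 2e^{-x}$; their product is $n^{-1/2+o(1)}$, negligible relative to the polylogarithmic target. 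Passing to expectation, I would integrate over $x$ up to order $\log n$ on the good event, and on its complement bound $T_{m_{n,\rho}} \widehat\Phi_{n,\rho}$ pointwise on $[-h_{n,\rho}, h_{n,\rho}]^d$ using the $\Upsilon_{\rho,S,d}$ coefficient bounds (at most polynomial growth in $n$), killed by $e^{-\log n}$. The crux of the argument, and the reason for the particular calibration of $m_{n,\rho}$ and $h_{n,\rho}$, is precisely this balancing act between the polynomial-extension blow-up and the rate of convergence of $\widehat\Phi_{n,\rho}$.
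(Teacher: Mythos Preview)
Your proposal is correct and follows essentially the same approach as the paper: the same Plancherel decomposition into a Sobolev tail and an $L^2$ error on $[-h_{n,\rho},h_{n,\rho}]^d$, the same split $T_{m_{n,\rho}}\widehat\Phi_{n,\rho}-\Phi_X = T_{m_{n,\rho}}(\widehat\Phi_{n,\rho}-\Phi_X)+(T_{m_{n,\rho}}-I)\Phi_X$, the same Taylor-remainder control via the $\Upsilon_{\rho,S,d}$ coefficient bounds, the same polynomial extension from the small box $[-\nu/2,\nu/2]^d$ to the large box (the paper, via \cite{LCGL2021}, uses a Legendre-polynomial argument rather than your Chebyshev formulation, but the resulting factor $(Ch_{n,\rho}/\nu)^{2m_{n,\rho}+O(d)}$ is the same), and the same balancing of this $n^{1/2+o(1)}$ blow-up against the almost-parametric rate of Proposition~\ref{prop:rate:char}. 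The paper's printed proof simply delegates these details to Theorem~3.2 of \cite{LCGL2021}; your write-up spells them out, which is fine.
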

The proof of Theorem \ref{thm:density} is detailed in Section~\ref{proof:thm:density}.

Let us compare this upper bound with the previous results in the literature. 
The best rates of convergence without the symmetry assumption on the noise are obtained in \cite{MR3372551}, who improve upon the results in \cite{MR1625869} regarding both the assumptions on the noise and signal distributions and the rates of convergence. Though the earlier work \cite{MR2396948} has weaker assumptions than \cite{MR3372551}, the author only proves consistency of his estimator but does not provide rates of convergence.

The rates in \cite{MR3372551} depend on the tail of the characteristic function of the signal and on the tail of the characteristic function of the noise, as is usual in the deconvolution literature. The authors prove polynomial rates of convergence when the noise is ordinary smooth (that is with a characteristic function decreasing as a power function), up to a power of $\log n$ factor when the signal is not ordinary smooth but supersmooth. For supersmooth errors the rate is a power of $\log n$.
Our assumptions are not an exact generalization of the ones in \cite{MR3372551}, as they do not need assumptions on the tails of the distribution of the signal, and instead assume that both the characteristic function of the signal and the characteristic function of the noise do not vanish anywhere and have some controlled behaviour near infinity. In contrast, the class upon which our upper bound applies includes both ordinary smooth and supersmooth noises, and allows the characteristic functions to vanish, even on open sets.
We propose in Section \ref{sec:adaptnoise} a method to choose between our estimator and the one proposed in \cite{MR3372551} to get the best of both worlds.

Although we were not able to prove it, we believe it should be possible to adapt our estimation procedure to the tails of the characteristic functions by selecting the parameter $\nuest$ based on the observations to obtain improved rates on the classes where the characteristic functions do not vanish and are ordinary smooth. Our simulations  indicate that this idea seems relevant, see Section \ref{sec:simu}. We discuss this point in Section \ref{sec:discussion}.

\subsection{Adaptivity in \texorpdfstring{$\rho$}{the tail parameter}}

The construction of the estimator above assumes the tail parameter $\rho$ known. Unfortunately, this tail parameter is typically unknown in practice. We now propose a data-driven model selection procedure to choose $\rho$ and we prove that the resulting estimator has a rate of convergence corresponding to the smallest $\rho$ such that $\Phi_X \in \Upsilon_{\rho,S}$ for some $S>0$.
Our strategy is based on Goldenshluger and Lepski's methodology \cite{MR2543590} and on the adaptivity in $\rho$ in \cite{LCGL2021}.
As usual, the idea is to perform a bias-variance trade off. Although we have an upper bound for the variance term, the bias is not easily accessible. 
The variance bound is
\begin{equation*}
    \sigma_n(\rho) = c_{\sigma} \left( \frac{\log \log(n)}{\log(n)} \right)^{\frac{2 \beta}{\rho}}
\end{equation*}
for all $\rho \in [1, \rho_0]$ and for some constant $c_{\sigma} > 0$. The proxy of the bias is defined for all $\rho \in [1,\rho_0]$ as
\begin{equation*}
    A_n(\rho) = 0 \vee \sup_{\rho' \in [\rho, \rho_0]} \lbrace \|\widehat{f}_{n,\rho'} - \widehat{f}_{n,\rho}\|_2 - \sigma_n(\rho') \rbrace.
\end{equation*}
Finally, the estimator of $\rho$ is
\begin{equation*}
    \widehat{\rho}_n \in \argmin \lbrace A_n(\rho) + \sigma_n(\rho), \ \rho \in [1, \rho_0] \rbrace,
\end{equation*}
and the estimator of the density of the signal is $\widehat{f}_{n,\widehat{\rho}_n}$. The following theorem states that this estimator is rate adaptive.

\begin{theorem}
\label{thm:adaptivity}
For any 
$\nu \in (0, \nuest]$, $\beta > 0$, $c_{\beta >0}$ $c(\nu) > 0$, $E>0$, $S>0$, there exists $c_{\sigma} > 0$ and $C > 0$ such that

\begin{equation*}
\limsup_{n \rightarrow + \infty} \sup_{\rho \in[1,\rho_{0}]}
\underset{\Qbb\in \Qcal^{(2d)} (\nu,c({\nu}),E)}
        {\sup_{\Pbb_{X} \in \Psi(\rho,S,\beta,c_{\beta})}}
    \left ( \frac{\log(n)}{\log \log(n)} \right )^{\frac{2\beta}{\rho}}
    \Ebb_{(\Pbb_{r(X)} * \Qbb)^{\otimes n}}[\| \widehat{f}_{n,\widehat{\rho}_n} - f \|^2_{2}]
        \leq C.
\end{equation*}
\end{theorem}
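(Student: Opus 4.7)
The plan is to use the Goldenshluger--Lepski methodology. Fix $\rho \in [1, \rho_0]$ such that $\Pbb_X \in \Psi(\rho, S, \beta, c_\beta)$, and observe the key monotonicity $\Upsilon_{\rho, S} \subset \Upsilon_{\rho', S}$ for $\rho' \geq \rho$ (since the denominator $(\|j\|_1)^{\|j\|_1 / \rho'}$ decreases in $\rho'$): consequently $\Pbb_X \in \Psi(\rho', S, \beta, c_\beta)$ for every $\rho' \in [\rho, \rho_0]$, so Proposition~\ref{prop:rate:char} and Theorem~\ref{thm:density} both apply to $\widehat{f}_{n, \rho'}$ throughout that range. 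Observe also that $\sigma_n$ is increasing in its argument.

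First I would derive an oracle-type inequality by splitting on the sign of $\widehat{\rho}_n - \rho$. If $\widehat{\rho}_n \geq \rho$, the definition of $A_n(\rho)$ gives $\|\widehat{f}_{n, \widehat{\rho}_n} - \widehat{f}_{n, \rho}\|_2 \leq A_n(\rho) + \sigma_n(\widehat{\rho}_n)$, and the minimality of $\widehat{\rho}_n$ bounds $\sigma_n(\widehat{\rho}_n)$ by $A_n(\rho) + \sigma_n(\rho)$. If $\widehat{\rho}_n < \rho$, the symmetric inequality holds because $\rho$ then belongs to the range of the sup defining $A_n(\widehat{\rho}_n)$. Combined with a triangle inequality, both cases yield
\[
    \|\widehat{f}_{n, \widehat{\rho}_n} - f\|_2 \leq \|\widehat{f}_{n, \rho} - f\|_2 + 2 A_n(\rho) + 2 \sigma_n(\rho),
\]
and the first term is controlled in expectation by Theorem~\ref{thm:density}.

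The second step is to control $A_n(\rho)$ on a high-probability event. Using $\|\widehat{f}_{n, \rho'} - \widehat{f}_{n, \rho}\|_2 \leq \|\widehat{f}_{n, \rho'} - f\|_2 + \|\widehat{f}_{n, \rho} - f\|_2$ and the monotonicity of $\sigma_n$, it suffices to show that $\sup_{\rho' \in [\rho, \rho_0]} \|\widehat{f}_{n, \rho'} - f\|_2$ is below $\sigma_n(\rho')/2$ simultaneously for all $\rho'$ with high probability. Applying Proposition~\ref{prop:rate:char} with $x = q \log n$ gives the required uniform-in-$\rho'$ control on the characteristic-function error, and propagating it through the truncation $T_{m_{n, \rho'}}$ and the Fourier inversion exactly as in the proof of Theorem~\ref{thm:density} transfers the bound to $\|\widehat{f}_{n, \rho'} - f\|_2$ with a multiplicative constant independent of $\rho'$. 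Calibrating $c_\sigma$ above this constant forces $A_n(\rho) = 0$ on an event $\Omega_n$ of probability at least $1 - 2 n^{-q}$.

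Squaring the oracle inequality, on $\Omega_n$ one gets $\|\widehat{f}_{n, \widehat{\rho}_n} - f\|_2^2 \lesssim \sigma_n(\rho)^2 + \|\widehat{f}_{n, \rho} - f\|_2^2$, whose expectation is of the target order by Theorem~\ref{thm:density}. On $\Omega_n^c$, a deterministic bound $\|\widehat{f}_{n, \widehat{\rho}_n}\|_2^2$ of polylogarithmic order in $n$ --- obtained by Plancherel from $|T_{m_{n, \rho}} \widehat{\Phi}_{n, \rho}| = O(1)$ on the polylogarithmic cube $[-h_{n, \rho}, h_{n, \rho}]^d$ --- makes the contribution $O(n^{-q} (\log n)^{O(1)})$, which is absorbed by taking $q$ to be a small absolute constant. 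The main technical obstacle lies in the middle step: the supremum in $A_n(\rho)$ ranges over the continuum $[\rho, \rho_0]$, so a union bound on a finite grid is not enough, and one must rely on the built-in uniformity of Proposition~\ref{prop:rate:char} and verify that the truncation and Fourier inversion stages of the proof of Theorem~\ref{thm:density} preserve this uniformity with $\rho'$-independent constants. A secondary subtlety is the calibration of $c_\sigma$, which must dominate nuisance constants depending on $(\nu, c(\nu), E, S, \beta, c_\beta, \rho_0)$ but not on the specific $\rho$, so that it can be chosen once for the whole parameter space.
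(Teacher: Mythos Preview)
Your proposal is correct and follows essentially the same approach as the paper: the paper does not give a self-contained proof but simply states that it follows the lines of Theorem~3.5 in \cite{LCGL2021} (with the substitution $\rho = 1/\kappa$ and the appropriate choice of $\Hcal$), and that reference carries out precisely the Goldenshluger--Lepski argument you describe---the oracle inequality via the case split on $\widehat{\rho}_n \gtrless \rho$, the high-probability control of $A_n(\rho)$ using the uniform-in-$\rho'$ deviation bound of Proposition~\ref{prop:rate:char} propagated through the truncation and Fourier inversion, and the deterministic bound on the complementary event. Your identification of the two technical points (continuum uniformity and $\rho$-independent calibration of $c_\sigma$) matches exactly what that external proof has to handle.
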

The proof follows the same lines as that of Theorem 3.5 of \cite{LCGL2021}, taking $\rho = 1/\kappa$ and $\Hcal$ as the set of multivariate analytic functions $g : \Cbb^{2d} \longrightarrow \Cbb$   such that there exists $G : \Cbb^{d} \longrightarrow \Cbb$ such that for all $(t_1,t_2) \in \Cbb^d \times \Cbb^d$, $g(t_1,t_2) = G(t_1+t_2)$.

\subsection{Lower bound}

In this section, we provide a lower bound of the minimax risk in the case $\rho = 1$, that is for compactly supported signals. The lower bound in Theorem \ref{thm:lowerbound} matches the rate given in Theorem \ref{thm:density} for our estimator, proving that our estimator, together with its adaptive version, achieve the minimax adaptive rate in this case.

\begin{theorem}
\label{thm:lowerbound}
For all $S>0$, $\beta \geq 1/2$, $c_{\beta}>0$, and $\nu > 0$, there exists a constant $c>0$ such that

\begin{equation*}
\inf_{\widehat{f}}
\underset{\Qbb\in \Qcal^{(2d)} (\nu,c({\nu}),E)}
        {\sup_{\Pbb_{X} \in \Psi(1,S,\beta,c_{\beta}) }}
    \Ebb_{(\Pbb_{r(X)} * \Qbb)^{\otimes n}}[\| \widehat{f} - f \|^2_{2}]
        \geq c \left ( \frac{\log \log(n)}{\log(n)} \right )^{2\beta},
\end{equation*}
where the infimum is taken over all the estimators $\widehat{f}$, that is all measurable functions of $Y_1, \ldots, Y_n$.
\end{theorem}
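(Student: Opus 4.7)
The strategy is to apply Le Cam's two-point inequality: it suffices to construct two densities $f_0, f_1 \in \Psi(1, S, \beta, c_\beta)$ and a single noise distribution $\Qbb \in \Qcal^{(2d)}(\nu, c(\nu), E)$ such that $\|f_0 - f_1\|_{2} \geq c (\log\log(n)/\log(n))^\beta$, while the $\chi^2$ divergence between the $n$-fold observation distributions stays bounded, keeping their total variation bounded away from $1$.

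I would first choose an adversarial noise with compactly supported characteristic function: let $\Phi_{\varepsilon^{(i)}}(t) = \prod_{k=1}^d (1-|t_k|/\nu)_+$, a tensor product of Fej\'er triangular functions. This is a genuine characteristic function (P\'olya's criterion), supported on $[-\nu, \nu]^d$, bounded below by a positive constant on $[-\nu/2, \nu/2]^d$, and the corresponding density has finite second moment, so that $\Qbb \in \Qcal^{(2d)}(\nu, c(\nu), E)$. The point is that $\Phi_Y(t_1, t_2) = \Phi_X(t_1+t_2)\Phi_{\varepsilon^{(1)}}(t_1)\Phi_{\varepsilon^{(2)}}(t_2)$ is then supported in $[-\nu, \nu]^{2d}$, so the observations only reveal $\Phi_X$ on $[-2\nu, 2\nu]^d$.

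Next, I would build the two signal hypotheses. Fix a smooth base density $f_0 \in \Psi(1, S, \beta, c_\beta)$ with support in $[-L, L]^d$ for some $L < S/e$ (to leave room in the Taylor-coefficient bound defining $\Upsilon_{1,S,d}$) and bounded below by a positive constant on a large interior subset. Set $m := \lfloor c_0 \log(n)/\log\log(n) \rfloor$ for a small constant $c_0$. Define a perturbation $g_m$ whose dependence in one coordinate (say $x_1$) is proportional to the Legendre polynomial $P_m(x_1/L)$ multiplied by a smooth cutoff (to enforce $H^\beta$ regularity), with a fixed smooth profile in the remaining coordinates, normalized so $\|g_m\|_2 = 1$. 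Orthogonality of $P_m$ to polynomials of lower degree gives $g_m$ exactly $m$ vanishing moments in $x_1$, so that $\Phi_{g_m}$ vanishes to order $m$ at the origin; spherical Bessel asymptotics then yield $\epsilon_m := \sup_{\|u\|_\infty \leq 2\nu} |\Phi_{g_m}(u)| \leq C (2eL\nu/m)^m$, super-exponentially small in $m$. Since $|\Phi_{g_m}|^2$ is concentrated near $|u_1| \asymp m/L$, one has $\int |\Phi_{g_m}(u)|^2 (1+\|u\|^2)^\beta du \asymp m^{2\beta}$, so $f_1 := f_0 + \eta_n g_m$ lies in $\Psi(1, S, \beta, c_\beta)$ provided $\eta_n m^\beta \leq C'$, and $f_1 \geq 0$ for $\eta_n$ sufficiently small.

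The observation perturbation $\delta := g_Y^{(1)} - g_Y^{(0)}$ has Fourier transform $\widehat\delta(t_1, t_2) = \eta_n \Phi_{g_m}(t_1+t_2) \Phi_{\varepsilon^{(1)}}(t_1) \Phi_{\varepsilon^{(2)}}(t_2)$ supported in $[-\nu, \nu]^{2d}$, so $\|\delta\|_\infty, \|\delta\|_2 \leq C \eta_n \epsilon_m$. To bound $\chi^2(P_Y^{(1)} \| P_Y^{(0)}) = \int \delta^2 / g_Y^{(0)}\, dy$, I would split the integral at $\{\|y\| \leq R\}$, where $g_Y^{(0)}$ is bounded below by a positive constant by continuity (giving a contribution $\leq C_R^{-1} \|\delta\|_2^2 \lesssim \eta_n^2 \epsilon_m^2$), and at $\{\|y\| > R\}$, where the $m$ vanishing moments of $g_m$ induce decay $|\delta(y)| \lesssim \eta_n \epsilon_m (1+\|y\|)^{-m-2}$ through a Taylor expansion of the smooth noise density in the convolution defining $\delta$, matching the P\'olya tails of $g_Y^{(0)}$. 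Balancing the Sobolev constraint $\eta_n \leq C_1 m^{-\beta}$ with the indistinguishability constraint $n \eta_n^2 \epsilon_m^2 \leq C_2$ gives $m \log m \asymp \log(n)$, hence $m \asymp \log(n)/\log\log(n)$, and consequently $\|f_0 - f_1\|_2 = \eta_n \asymp (\log\log(n)/\log(n))^\beta$; Le Cam's inequality concludes. The main technical obstacle is the rigorous $\chi^2$ bound, because $g_Y^{(0)}$ is not uniformly bounded below on $\Rbb^{2d}$; overcoming this demands combining the compact Fourier support of $\widehat\delta$ with the sharp spatial decay of $\delta$ provided by the vanishing moments of $g_m$, possibly with a slightly smoother noise (e.g.\ higher-order P\'olya splines) to get additional margin.
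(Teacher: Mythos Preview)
Your overall architecture is the same as the paper's: Le Cam's two-point method, a noise whose characteristic function is compactly supported (the paper uses a specific kernel with $\Fcal[g]$ supported on $[-c,c]$ rather than the Fej\'er triangle, but the role is identical), a Legendre-polynomial perturbation of degree $m\asymp \log n/\log\log n$, and the Sobolev budget forcing $\eta_n\asymp m^{-\beta}$. The construction of $f_0,f_1$ and the verification that they lie in $\Psi(1,S,\beta,c_\beta)$ parallels the paper, which writes the perturbation as $\alpha_n(P_{K_n}*u_b)$ (convolving with a bump rather than multiplying by a cutoff, a cosmetic difference).

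The substantive divergence---and the place where your argument has a gap---is the indistinguishability step. You control $\chi^2(P_Y^{(1)}\|P_Y^{(0)})$ and, for the tail $\{\|y\|>R\}$, claim that the $m$ vanishing moments of $g_m$ yield $|\delta(y)|\lesssim\eta_n\epsilon_m(1+\|y\|)^{-m-2}$ through a Taylor expansion of $g_\varepsilon$. The exponent $-(m+2)$ is not correct for a band-limited noise: the spatial decay of $g_\varepsilon^{(k)}(z)$ as $|z|\to\infty$ is governed by the regularity of $\widehat g_\varepsilon$ at the boundary of its support, \emph{not} by $k$. For the Fej\'er kernel one has $|g_\varepsilon^{(k)}(z)|\lesssim \nu^k|z|^{-2}$ for every $k$, so the Taylor remainder after killing $m$ moments gives at best $|\delta|\lesssim (L\nu)^m/m!\cdot|y|^{-2}$, i.e.\ the small factor $\epsilon_m$ together with a \emph{fixed} decay $|y|^{-2}$, never $|y|^{-m-2}$. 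Making $\int\delta^2/g_Y^{(0)}$ finite then requires a matching lower bound on $g_Y^{(0)}$ at infinity, which is delicate in $\Rbb^{2d}$ because $\delta$ does not factor over $(y_1,y_2)$; your sketch does not address this, and higher-order P\'olya splines only raise the fixed exponent, they do not make it depend on $m$.

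The paper avoids the whole issue by bounding total variation directly, via a Sobolev-type inequality
\[
\|h\|_{L^1(\Rbb^{2d})}\ \le\ \Big(\sum_{\eta'\in\{0,1\}^{2d}}\big\|\partial^{\eta'}\widehat h\big\|_{L^2}^2\Big)^{1/2}
\]
applied to $h=P_Y^{(1)}-P_Y^{(0)}$ (Lemma~16 of \cite{LCGL2021}). Since $\widehat h$ is supported in $[-c,c]^{2d}$, every term on the right reduces to an integral of $|\Fcal[P_{K_n}]^{(j)}|^2$, $j\in\{0,1,2\}$, over a fixed compact set, and the Legendre estimate $|\Fcal[P_{K_n}]^{(j)}(u)|\le C(C'|u|/K_n)^{K_n-j}$ gives $\|P_Y^{(1)}-P_Y^{(0)}\|_{L^1}=O(1/n)$ immediately. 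This Fourier-side route needs no information about the spatial tails of $g_Y^{(0)}$ and is the clean replacement for your $\chi^2$ step.
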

The proof of Theorem \ref{thm:lowerbound} is detailed in Section \ref{proof:thm:lowerbound}.
It is based on Le Cam’s method, also known as the two-points method (see \cite{MR0856411}), one of the most widespread technique to derive lower bounds, and adapts ideas from \cite{LCGL2021} to the repeated measurements setting.

\section{Adaptativity to unknown noise regularity}
\label{sec:adaptnoise}

In this section, we propose a method to choose between our estimator and the one developed in \cite{MR3372551}.
We thus consider the same setting as \cite{MR3372551} and assume that $d=1$, that is, observations are in one dimension. We also assume that both coordinates of the noise have the same distribution, with density 
$f_{\varepsilon}$. 

Let us first recall results from \cite{MR3372551}. For positive constants $C_1$, $C_2$, $c$, $\beta$ and $\eta$, we denote by $\Fcal^u(C_1,C_2,c,\beta,\eta)$ the class of square integrable probability densities $f$ whose characteristic function $\Phi$ satisfies

\begin{equation}
\label{eq:condFC}
    \forall u,v \in \Rbb^{+}, \quad u \geq v \Rightarrow |\Phi(u)| \leq C_2 |\Phi(v)|
\end{equation}
and
\begin{equation*}
    \forall u \in \Rbb, \ |\Phi(u)| \leq \left( 1 + C_1 |u|^2 \right)^{-\beta} e^{-c|u|^{\eta}}.
\end{equation*}
We denote $\Fcal^{l}(C_1,C_2,c,\beta,\eta)$ the class of square integrable probability densities for which the condition \eqref{eq:condFC} holds and
\begin{equation*}
    \forall u \in \Rbb, \ |\Phi(u)| \geq \left( 1 + C_1 |u|^2 \right)^{-\beta} e^{-c|u|^{\eta}}.
\end{equation*}
For $C_3 >0$, we denote $\Fcal(C_3,p)$ the class of pairs $(f_X,f_{\varepsilon})$ of square integrable probability densities for which the following conditions are met:
\begin{equation*}
    \left( \| \Phi_X''\Phi_{\varepsilon} + \Ebb[\varepsilon^2] \Phi_X \Phi_{\varepsilon}\|_1 + \|\Phi_X'\Phi_{\varepsilon} \|_2^2 \right)^{p} + \Ebb[|X+\varepsilon|^{2p}] \leq C_3,
\end{equation*}
and $(\log(\Phi_{X}\Phi_{\varepsilon})')$ is square integrable with
\begin{equation*}
    \|\log(\Phi_{X}\Phi_{\varepsilon})' \|_2^{2p} \leq C_3.
\end{equation*}
Finally, we use the notation
\begin{equation*}
    \Fcal^{u,l}(X,\varepsilon,p) = \left[ \Fcal^{u}(C_{1,X},C_{2,X},c_X,\beta_X,\eta_X) \times \Fcal^{l}(C_{1,\varepsilon},C_{2,\varepsilon},c_{\varepsilon},\beta_{\varepsilon},\eta_{\varepsilon}) \right] \cap \Fcal(C_3,p).
\end{equation*}
In the following, we say that the noise is ordinary smooth when $f_{\varepsilon} \in \Fcal^{u}(C_{1,\varepsilon},C_{2,\varepsilon},0,\beta_{\varepsilon},\eta_{\varepsilon})$ (that is with $c_{\varepsilon} = 0$) and the signal is supersmooth if its density $f$ is in $ \Fcal^{u}(C_{1,X}, C_{2,X}, c_x, \beta_X, \eta_X)$ with $c_X > 0$.
The authors of \cite{MR3372551} prove the following theorem.
\begin{theorem} (Case III in \cite{MR3372551})
\label{thm:Comte}
Let $p \geq 2$, there exists $\widehat{f}^{CK}$ such that, in the case of supersmooth signal and ordinary smooth errors,
\begin{equation*}
    \sup_{(f,f_{\varepsilon}) \in \Fcal^{u,l}(X,\varepsilon,p)} \Ebb_{(f,f_{\varepsilon})}[\|\widehat{f}^{CK}-f \|_2^2] = O \left( n^{-\frac{p}{p+1}} (\log(n))^{\frac{p \gamma_1 + \gamma_2}{(p+1)\eta_X}} \right)
\end{equation*}
where $\gamma_1 = - 4 \beta_X+1-\eta_X$ and $\gamma_2 = \max  \lbrace \max \lbrace p(4 \beta_X + 4 \beta_{\varepsilon}+1 - \eta_X),0\rbrace + 1 - \eta_X, 0  \rbrace$.
\end{theorem}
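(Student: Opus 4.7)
The plan is to build $\widehat{f}^{CK}$ via Kotlarski's identity applied to the joint characteristic function of the two repetitions, and then to perform a classical bias--variance analysis for a Fourier-inversion deconvolution estimator with a cutoff. Set $\phi(t_1,t_2) = \Ebb[\exp(it_1 Y^{(1)} + it_2 Y^{(2)})] = \Phi_X(t_1+t_2)\Phi_{\varepsilon^{(1)}}(t_1)\Phi_{\varepsilon^{(2)}}(t_2)$. Recentering so that $\Ebb[\varepsilon^{(1)}]=0$, which is allowed since the model is identifiable only up to translation, one has $\partial_1\phi(0,t)/\phi(0,t) = \Phi_X'(t)/\Phi_X(t)$ and hence
\begin{equation*}
\Phi_X(u) = \exp\!\left(\int_0^u \frac{\partial_1\phi(0,t)}{\phi(0,t)}\,dt\right).
\end{equation*}
I would replace $\phi(0,t)$ and $\partial_1\phi(0,t)$ by their empirical counterparts, with a small-denominator cutoff to control the event that $|\widehat\phi_n(0,t)|$ is too small; this defines $\widehat\Phi^{CK}_X$, and the density estimator is $\widehat{f}^{CK}(x) = \frac{1}{2\pi}\int_{-h_n}^{h_n} e^{-iux}\widehat\Phi^{CK}_X(u)\,du$ for a cutoff $h_n$ to be chosen.

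Next, applying Plancherel's identity and taking expectation, I would decompose the $L^2$ risk as a bias term $\frac{1}{2\pi}\int_{|u|>h_n}|\Phi_X(u)|^2\,du$ plus a variance term $\frac{1}{2\pi}\int_{-h_n}^{h_n}\Ebb|\widehat\Phi^{CK}_X(u)-\Phi_X(u)|^2\,du$. Supersmoothness of $f$ with parameter $c_X>0$ controls the bias by $h_n^{-(4\beta_X+1-\eta_X)}\exp(-2c_X h_n^{\eta_X})$. For the variance, I would linearize the exponential in Kotlarski's formula: on the event that $|\widehat\phi_n(0,t)|$ stays bounded away from zero, $\widehat\Phi^{CK}_X(u)-\Phi_X(u) \approx \Phi_X(u)\int_0^u \bigl[\widehat{g}_n(t)-g(t)\bigr]\,dt$ with $g(t)=\partial_1\phi(0,t)/\phi(0,t)$. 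Empirical-process bounds on $\widehat\phi_n$ and $\partial_1\widehat\phi_n$, combined with the delta method applied to a ratio, give
\begin{equation*}
\Ebb|\widehat{g}_n(t)-g(t)|^2 \lesssim \frac{1}{n\,|\Phi_X(t)\Phi_{\varepsilon^{(2)}}(t)|^{2}}\left(1+\left|\frac{\Phi_X'(t)}{\Phi_X(t)}\right|^2\right),
\end{equation*}
where the moment $\Ebb[|Y^{(1)}|^2]$ enters through the class $\Fcal(C_3,p)$. Ordinary smoothness of the noise ($c_\varepsilon=0$) makes $|\Phi_{\varepsilon^{(2)}}|^{-1}$ grow only polynomially, so after Cauchy--Schwarz over $[0,u]$ and integration in $u\in[-h_n,h_n]$ one obtains a variance of order $h_n^{4\beta_X+4\beta_\varepsilon+1}/n$ up to polylogarithmic factors. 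Balancing bias against variance at $h_n \asymp ((\log n)/(2c_X))^{1/\eta_X}$ produces the leading rate.

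The hard part is the precise accounting of the polynomial factors and the handling of the small-denominator event $\{|\widehat\phi_n(0,t)|<\tfrac{1}{2}|\phi(0,t)|\}$. The monotonicity condition \eqref{eq:condFC} is the key tool here: it allows the pointwise lower bound on $|\Phi_X|$ to be replaced by its value at the endpoint $h_n$, which reduces the variance analysis essentially to an evaluation at a single point rather than an integral that might be dominated by the tails. The exponent $p/(p+1)$ rather than $1$ arises because the entire argument has to be carried out with $L^{2p}$ moment bounds instead of variance, in order to absorb via Markov's inequality the contribution of the small-denominator event; the conditions in $\Fcal(C_3,p)$ on $\|\Phi_X''\Phi_\varepsilon+\Ebb[\varepsilon^2]\Phi_X\Phi_\varepsilon\|_1$, $\|\Phi_X'\Phi_\varepsilon\|_2^2$, $\Ebb[|X+\varepsilon|^{2p}]$, and $\|(\log\Phi_X\Phi_\varepsilon)'\|_2^{2p}$ are precisely calibrated to make this $L^{2p}$ estimate work. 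Carrying the polynomial corrections to $|\Phi_X|$ and $|\Phi_\varepsilon|$ through the bias, variance, and $L^{2p}$ remainder terms, and tracking the resulting powers of $\log n$, then produces the stated exponents $\gamma_1=-4\beta_X+1-\eta_X$ and $\gamma_2$, reproducing Case III of \cite{MR3372551}.
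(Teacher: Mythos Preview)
The paper does not prove this theorem: it is quoted verbatim as ``Case III in \cite{MR3372551}'' and used as a black box in the subsequent construction of the combined estimator $\widehat{f}^A$. There is therefore no proof in the paper to compare your proposal against.

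That said, your sketch is a faithful outline of the strategy in \cite{MR3372551}: Kotlarski's formula to recover $\Phi_X$ from the joint characteristic function of the two repetitions, Fourier inversion with a cutoff $h_n$, the bias--variance split, and the use of $L^{2p}$ moment bounds (together with the integrability conditions defining $\Fcal(C_3,p)$) to control the small-denominator event, which is indeed what produces the exponent $p/(p+1)$ rather than $1$. The role you assign to the monotonicity condition \eqref{eq:condFC} is also correct. If you want to turn this into a self-contained proof you would need to make the linearization step rigorous and track the polynomial exponents carefully, but for the purposes of the present paper none of this is needed: the result is simply imported.
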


We build a new estimator by combining the estimator $\widehat{f}^{CK}$ and $\widehat{f}_{n,\rho}$ 
as follows:
\begin{equation*}
    \widehat{f}^{A} = \begin{cases}
    \widehat{f}^{CK} \ \ \text{if} \ \| \widehat{f}_{n,\rho} - \widehat{f}^{CK} \|^2_{2} \leq C_{adapt} \left ( \frac{\log \log(n)}{\log(n)} \right )^{\frac{2\beta}{\rho}} \\
    \widehat{f}_{n,\rho} \ \  \text{if} \ \| \widehat{f}_{n,\rho} - \widehat{f}^{CK} \|^2_{2} > C_{adapt} \left ( \frac{\log \log(n)}{\log(n)} \right )^{\frac{2 \beta}{\rho}} \\
    \end{cases}
\end{equation*}
for some constant $C_{adapt}$.
The idea is that either $(f,f_{\varepsilon}) \in \Fcal^{u,l}(X,\varepsilon,p)$, in which case both $\widehat{f}^{CK}$ and $\widehat{f}_{n,\rho}$ will be close to $f$ and thus to each other, so that $\widehat{f}^A = \widehat{f}^{CK}$, which has better rates of convergence in this case, or $(f,f_{\varepsilon}) \notin \Fcal^{u,l}(X,\varepsilon,p)$, in which case we have no control over $\widehat{f}^{CK}$, and $\widehat{f}^A$ cannot perform worse than $\widehat{f}_{n,\rho}$.
A good choice is $C_{adapt}\geq 4 C_{\rho}$, for $C_{\rho}$ given in the following proposition.

\begin{proposition}
\label{prop:deviation}
For all $\rho\geq 1$, 
$\nu \in (0, \nuest]$, $\beta > 0$, $c_{\beta} >0$, $c(\nu) > 0$, $E>0$, $S>0$, there exists $C_{\rho} > 0$ such that
\begin{equation*}
\underset{\Qbb\in \Qcal^{(2d)} (\nu,c({\nu}),E)}
        {\sup_{\Pbb_{X} \in \Psi(\rho,S,\beta,c_{\beta})}}
        (\Pbb_{r(X)} * \Qbb)^{\otimes n}
        \left[ \| \widehat{f}_{n,\rho} - f_X \|^2_2 \geq C_{\rho} \left( \frac{\log\log(n)}{\log(n)} \right)^{2\beta/\rho} \right] =O\left( \frac{1}{n}\right).
\end{equation*}
\end{proposition}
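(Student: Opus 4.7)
The plan is to run the proof of Theorem~\ref{thm:density} at the level of a deviation inequality rather than an expectation. The key input is Proposition~\ref{prop:rate:char} applied with $x=\log n$, which yields failure probability $2e^{-\log n}=2/n$, matching the $O(1/n)$ required. On the complementary event, the task is to show that $\|\widehat{f}_{n,\rho}-f_X\|_2^2$ is bounded by a constant times $(\log\log n/\log n)^{2\beta/\rho}$.

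By Plancherel's identity, writing $m=m_{n,\rho}$ and $h=h_{n,\rho}$,
\begin{equation*}
(2\pi)^d \|\widehat{f}_{n,\rho}-f_X\|_2^2
= \int_{[-h,h]^d}|T_m\widehat{\Phi}_{n,\rho}(u)-\Phi_X(u)|^2\,du
+\int_{\Rbb^d\setminus[-h,h]^d}|\Phi_X(u)|^2\,du,
\end{equation*}
so three pieces must be controlled. The tail term is deterministic and at most $c_\beta h^{-2\beta}$ using $\Pbb_X\in\Psi(\rho,S,\beta,c_\beta)$; with $h=c_h m^{1/\rho}/S$ and $m\asymp\log n/\log\log n$ it is $O((\log\log n/\log n)^{2\beta/\rho})$. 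Splitting $T_m\widehat{\Phi}_{n,\rho}-\Phi_X=T_m(\widehat{\Phi}_{n,\rho}-\Phi_X)+(T_m\Phi_X-\Phi_X)$, the second, deterministic piece is bounded on $[-h,h]^d$ by summing the tail of the Taylor series of $\Phi_X$: the coefficient estimates defining $\Upsilon_{\rho,S}$ give a super-geometric decay which, combined with the condition $c_h\leq\exp(-(5d+3)/2)$, yields a contribution again of order $(\log\log n/\log n)^{2\beta/\rho}$. These two deterministic pieces supply the constant $C_\rho$.

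The stochastic piece $\int_{[-h,h]^d}|T_m(\widehat{\Phi}_{n,\rho}-\Phi_X)|^2\,du$ is handled by a polynomial transfer argument. Since $T_m(\widehat{\Phi}_{n,\rho}-\Phi_X)$ is a polynomial of total degree at most $m$, a multivariate Markov--Nikolskii inequality on nested cubes gives
\begin{equation*}
\int_{[-h,h]^d}|T_m(\widehat{\Phi}_{n,\rho}-\Phi_X)|^2\,du
\leq \bigl(Ch/\nu\bigr)^{2m}\int_{[-\nu/2,\nu/2]^d}|T_m(\widehat{\Phi}_{n,\rho}-\Phi_X)|^2\,du.
\end{equation*}
Writing $T_m(\widehat{\Phi}_{n,\rho}-\Phi_X)=(\widehat{\Phi}_{n,\rho}-\Phi_X)-R_m(\widehat{\Phi}_{n,\rho}-\Phi_X)$ and using that both $\widehat{\Phi}_{n,\rho}$ and $\Phi_X$ lie in $\Upsilon_{\rho,S}$, the Taylor tail $R_m$ contributes a deterministic super-exponentially small amount, and the main term is $\|\widehat{\Phi}_{n,\rho}-\Phi_X\|_{L^2([-\nu/2,\nu/2]^d)}^2=O((\log n/n^{1-\delta})^{1-\delta''})$ on the event of Proposition~\ref{prop:rate:char} with $x=\log n$. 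The main technical hurdle is the calibration of the amplification factor $(Ch/\nu)^{2m}=\exp(O(m\log(h/\nu)))$: a direct computation gives $m\log(h/\nu)\leq (1/4)\log n+o(\log n)$, so the amplification is at most $n^{1/2+o(1)}$; combined with the almost-parametric deviation of Proposition~\ref{prop:rate:char}, the stochastic contribution is $O(n^{-\alpha})$ for some $\alpha>0$, which is negligible compared to the bias rate. Summing the three contributions on the event of probability at least $1-2/n$ yields the proposition.
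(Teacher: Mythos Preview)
Your proposal is correct and follows essentially the same route as the paper. The paper's proof invokes the deterministic decomposition from the proof of Theorem~3.2 in \cite{LCGL2021} to obtain a pointwise bound of the form
\[
\|\widehat f_{n,\rho}-f\|_2^2 \leq C\max\Big\{\exp(-m_{n,\rho}),\; (2m_{n,\rho}/\rho)^{2m_{n,\rho}/\rho}\exp(-m_{n,\rho})\,\|\widehat\Phi_{n,\rho}-\Phi_X\|_{2,\nu/2}^2,\; m_{n,\rho}^{-2\beta/\rho}\Big\},
\]
then applies Proposition~\ref{prop:rate:char} with $x=\log n$ and $\delta,\delta''$ chosen so that $(1-\delta)(1-\delta'')>1/2$, exactly as you do; the calibration $(2m/\rho)^{2m/\rho}\leq n^{1/2+o(1)}$ is the same computation you carry out. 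The only cosmetic difference is that the polynomial extrapolation from $[-\nu/2,\nu/2]^d$ to $[-h,h]^d$ is done in \cite{LCGL2021} via a Legendre expansion and Cauchy--Schwarz rather than a Markov--Nikolskii inequality, but the resulting amplification factor is of the same order.
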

The proof of Proposition \ref{prop:deviation} is detailed in section \ref{proof:thm:adaptnoise}.
%

We denote by $\Fcal^{u,l}(X,\varepsilon,p,\rho,S,\nu,c(\nu),E)$ the set of pairs of densities $(f_X,f_{\varepsilon})$ such that $(f_X, f_{\varepsilon}) \in \Fcal^{u,l}(X,\varepsilon,p)$, the probability distribution with density $f$ is in $\Psi(\rho,S,\beta_X, c_{\beta})$, and the probability distribution with two i.i.d. coordinates with density $f_{\varepsilon}$ is in $ \Qcal^{(2)}(\nu,c(\nu),E)$.
The following theorem proves that the estimator $\widehat{f}^A$ inherits the best rate of convergence.
\begin{theorem}
\label{thm:adaptnoise}
For all $p \geq 2$, $\rho\geq 1$, 
$\nu \in (0, \nuest]$, $\beta > 0$, $c_{\beta >0}$ $c(\nu) > 0$, $E>0$, $S>0$, for all $C_{adapt} \geq 4 C_{\rho}$,
\begin{equation}
\label{eq:adapt1}
    \sup_{(f,f_{\varepsilon}) \in \Fcal^{u,l}(X,\varepsilon,p,\rho,S,\nu,c(\nu),E) 
    } \Ebb_{(f,f_{\varepsilon})}\left[\|\widehat{f}^{A}-f \|_2^2\right] = O \left( n^{-\frac{p}{p+1}} (\log(n))^{\frac{p \gamma_1 + \gamma_3}{(p+1)\eta_X}} \right)
\end{equation}
where where $\gamma_1 = - 4 \beta_X+1-\eta_X$ and $\gamma_2 = \max  \lbrace \max \lbrace p(4 \beta_X + 4 \beta_{\varepsilon}+1 - \eta_X),0\rbrace + 1 - \eta_X, 0  \rbrace$
while
\begin{equation}
\label{eq:adapt2}
\underset{\Qbb\in \Qcal^{(2d)} (\nu,c({\nu}),E)}
        {\sup_{\Pbb_{X} \in \Psi(\rho,S,\beta,c_{\beta})}}
        \Ebb_{(\Pbb_{r(X)} * \Qbb)^{\otimes n}}
        \left[ \| \widehat{f}^A - f \|^2_2 \right] =O\left( \left( \frac{\log\log(n)}{\log(n)} \right)^{2\beta/\rho}\right).
\end{equation}
\end{theorem}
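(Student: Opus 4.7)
The proof splits naturally into the two bounds \eqref{eq:adapt1} and \eqref{eq:adapt2}, and I would start with the easier one. For \eqref{eq:adapt2}, I would decompose $\|\widehat{f}^A - f\|_2^2$ according to which branch of the definition is selected. On the event $\widehat{f}^A = \widehat{f}_{n,\rho}$, one directly gets $\|\widehat{f}^A - f\|_2^2 = \|\widehat{f}_{n,\rho} - f\|_2^2$, whose expected value is controlled by Theorem~\ref{thm:density}. On the complementary event $\widehat{f}^A = \widehat{f}^{CK}$, the selection rule forces $\|\widehat{f}_{n,\rho} - \widehat{f}^{CK}\|_2^2 \leq C_{adapt}(\log\log(n)/\log(n))^{2\beta/\rho}$, so the triangle inequality yields
\begin{equation*}
\|\widehat{f}^{CK} - f\|_2^2 \leq 2 C_{adapt}\bigl(\log\log(n)/\log(n)\bigr)^{2\beta/\rho} + 2\|\widehat{f}_{n,\rho} - f\|_2^2,
\end{equation*}
and once again Theorem~\ref{thm:density} gives the required rate in expectation.

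For \eqref{eq:adapt1}, I would again split the expectation: $\Ebb[\|\widehat{f}^A - f\|_2^2] = \Ebb[\|\widehat{f}^{CK} - f\|_2^2 \mathbb{1}_{A_1}] + \Ebb[\|\widehat{f}_{n,\rho} - f\|_2^2 \mathbb{1}_{A_2}]$ where $A_1$ and $A_2$ are the two disjoint events in the definition of $\widehat{f}^A$. The first term is bounded by $\Ebb[\|\widehat{f}^{CK} - f\|_2^2]$, which has the desired rate under $(f,f_\varepsilon)\in\Fcal^{u,l}(X,\varepsilon,p)$ by Theorem~\ref{thm:Comte}. The heart of the argument is the second term. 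The key observation is that on $A_2$, i.e. when $\|\widehat{f}_{n,\rho} - \widehat{f}^{CK}\|_2 > \sqrt{C_{adapt}}(\log\log(n)/\log(n))^{\beta/\rho}$, the triangle inequality together with $C_{adapt} \geq 4 C_\rho$ forces at least one of $\|\widehat{f}_{n,\rho} - f\|_2^2 > C_\rho (\log\log(n)/\log(n))^{2\beta/\rho}$ or $\|\widehat{f}^{CK} - f\|_2^2 > C_\rho(\log\log(n)/\log(n))^{2\beta/\rho}$ to hold. The first has probability $O(1/n)$ by Proposition~\ref{prop:deviation}, and the second has probability $O(n^{-p/(p+1)}(\log n)^{\gamma}/(\log\log(n)/\log(n))^{-2\beta/\rho})$ by Markov's inequality combined with Theorem~\ref{thm:Comte}.

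To turn this probability bound on $A_2$ into a control of $\Ebb[\|\widehat{f}_{n,\rho} - f\|_2^2 \mathbb{1}_{A_2}]$ I would establish a deterministic upper bound on $\|\widehat{f}_{n,\rho}\|_2^2$ polynomial in $\log n$. By Parseval applied to the Fourier inversion in \eqref{eq_inversion_Fourier}, $\|\widehat{f}_{n,\rho}\|_2^2$ equals $(2\pi)^{-d}$ times the $L^2$ norm of $T_{m_{n,\rho}}\widehat{\Phi}_{n,\rho}$ on $[-h_{n,\rho},h_{n,\rho}]^d$; the coefficient bounds built into the definition of $\Upsilon_{\rho,S,d}$, together with $Sh_{n,\rho}= c_h m_{n,\rho}^{1/\rho}$, give a uniform pointwise bound on the truncated polynomial and hence $\|\widehat{f}_{n,\rho}\|_2^2 \leq C(\log n)^{\kappa}$ almost surely. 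Combined with $\|f\|_2^2 \leq c_\beta/(2\pi)^d$ (from $\beta>0$ and the definition of $\Psi(\rho,S,\beta,c_\beta)$), we obtain an almost sure bound on $\|\widehat{f}_{n,\rho} - f\|_2^2$ polynomial in $\log n$, which combined with the probability bound on $A_2$ yields the rate $n^{-p/(p+1)}$ times some power of $\log n$.

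The main obstacle is purely one of bookkeeping: making sure that the various logarithmic factors coming from the deterministic $L^2$ bound on $\widehat{f}_{n,\rho}$, from the Markov estimate applied to Theorem~\ref{thm:Comte}, and from the threshold $(\log\log(n)/\log(n))^{2\beta/\rho}$ combine into a power of $\log n$ no larger than $(p\gamma_1 + \gamma_3)/((p+1)\eta_X)$. Since $\gamma_3$ in the statement differs from the $\gamma_2$ appearing in Theorem~\ref{thm:Comte}, this extra log budget should precisely absorb the cost of the probability-to-expectation conversion, and no essentially new ingredient is required beyond Theorems~\ref{thm:density}, \ref{thm:Comte} and Proposition~\ref{prop:deviation}.
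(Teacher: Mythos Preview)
Your argument for \eqref{eq:adapt2} is correct and essentially identical to the paper's.

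For \eqref{eq:adapt1}, your decomposition and the split of $A_2$ into the two subevents (large $\|\widehat{f}_{n,\rho}-f\|_2$ versus large $\|\widehat{f}^{CK}-f\|_2$) is exactly where the paper starts. The difference is in how the second subevent is handled. You plan to bound its probability via Markov's inequality applied to Theorem~\ref{thm:Comte}, and then multiply by a deterministic $(\log n)^\kappa$ bound on $\|\widehat{f}_{n,\rho}-f\|_2^2$. This produces extra logarithmic factors, roughly $(\log n)^{\kappa}\,(\log n/\log\log n)^{2\beta/\rho}$, on top of the rate of Theorem~\ref{thm:Comte}. The paper avoids this with a one-line pointwise comparison: on the event
\[
\Bigl\{\|\widehat{f}_{n,\rho}-f\|_2^2 \le C_\rho\bigl(\tfrac{\log\log n}{\log n}\bigr)^{2\beta/\rho}\Bigr\}\cap\Bigl\{\|\widehat{f}^{CK}-f\|_2^2 > C_\rho\bigl(\tfrac{\log\log n}{\log n}\bigr)^{2\beta/\rho}\Bigr\}
\]
one trivially has $\|\widehat{f}_{n,\rho}-f\|_2^2 \le \|\widehat{f}^{CK}-f\|_2^2$, so this contribution is absorbed directly into $2\,\Ebb\bigl[\|\widehat{f}^{CK}-f\|_2^2\bigr]$, with no Markov step and no extra logarithms. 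The deterministic bound on $\|\widehat{f}_{n,\rho}-f\|_2^2$ is then needed only on the rare event of probability $O(1/n)$ from Proposition~\ref{prop:deviation}, where any polylog factor is harmless since $1/n = o(n^{-p/(p+1)})$.

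Your reading of $\gamma_3$ as deliberate ``extra log budget'' to absorb the Markov cost is a misinterpretation: the paper's proof reproduces the exponent of Theorem~\ref{thm:Comte} exactly, and the $\gamma_3$ in the display is a typo for $\gamma_2$ (only $\gamma_1,\gamma_2$ are ever defined). So your bookkeeping concern is genuine for your route, but the paper's direct comparison makes it disappear.
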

\begin{proof}
We first consider the case where $\Qbb\in \Qcal^{(2d)} (\nu,c({\nu}),E)$ and $\Pbb_{X} \in \Psi(\rho,S,\beta,c_{\beta})$.
The difference $\widehat{f}^A - f$ can be written as
\begin{equation*}
    \widehat{f}^A - f = (\widehat{f}^{CK} - \widehat{f}_{n,\rho}) 1|_{\widehat{f}^A = \widehat{f}^{CK}} + (\widehat{f}_{n,\rho} - f),
\end{equation*}
so that
\begin{align*}
\| \widehat{f}^A - f \|_2^2
    & \leq 2 \| \widehat{f}_{n,\rho} - f \|_2^2 + 2  \| (\widehat{f}^{CK} - \widehat{f}_{n,\rho}) 1|_{\widehat{f}^A = \widehat{f}^{CK}} \|_2^2 \\
    & \leq 2 \| \widehat{f}_{n,\rho} - f \|_2^2 + C_{adapt} \left( \frac{\log\log(n)}{\log(n)} \right)^{2\beta/\rho}
\end{align*}
and~\eqref{eq:adapt2} follows from Theorem \ref{thm:density}.
On the other hand, 
\begin{equation*}
\| \widehat{f}^A - f \|_2^2 = \| \widehat{f}^{CK} - f \|_2^2 1|_{\widehat{f}^A = \widehat{f}^{CK}} +  \| \widehat{f}_{n,\rho} - f\|_2^2 1|_{\widehat{f}^A = \widehat{f}_{n,\rho}}.
\end{equation*}
Since
\begin{equation*}
\|\widehat{f}_{n,\rho} - \widehat{f}^{CK} \|_2^2 \leq 2 \| \widehat{f}_{n,\rho} - f \|^2_2 + 2 \| \widehat{f}^{CK}-f\|^2_2,
\end{equation*}
the event $\widehat{f}^A = \widehat{f}_{n,\rho}$ can only be achieved if either we have $\| \widehat{f}_{n,\rho} - f \|^2_2 \geq C_{\rho} \left( \frac{\log\log(n)}{\log(n)} \right)^{2\beta/\rho}$ or if we have simultaneously $\| \widehat{f}^{CK} - f \|^2_2 \geq C_{\rho} \left( \frac{\log\log(n)}{\log(n)} \right)^{2 \beta/\rho}$ and $\| \widehat{f}_{n,\rho} - f \|^2_2 \leq C_{\rho} \left( \frac{\log\log(n)}{\log(n)} \right)^{2 \beta/\rho}$. Under this second possibility, $\| \widehat{f}_{n,\rho} - f \|^2_2 \leq \| \widehat{f}^{CK} - f \|^2_2$, so that

\begin{align*}
\| \widehat{f}^A - f \|^2_2
    &\leq 2 \| \widehat{f}^{CK} - f \|^2_2 +
\| \widehat{f}_{n,\rho} - f \|^2_2 1|_{\| \widehat{f}_{n,\rho} - f \|^2_2 \geq C_{\rho} \left( \frac{\log\log(n)}{\log(n)} \right)^{2 \beta/\rho}}
\\
    &\leq 2\| \widehat{f}^{CK} - f \|^2_2 + \text{diam}(\Upsilon_{\rho,S})^2 1|_{ \| \widehat{f}_{n,\rho} - f \|^2_2 \geq C_\rho \left( \frac{\log\log(n)}{\log(n)} \right)^{2 \beta/\rho} }
\end{align*}
and \eqref{eq:adapt1} follows from Theorem \ref{thm:Comte} and Proposition \ref{prop:deviation}.
\end{proof}

Note that Proposition \ref{prop:deviation} does not provide an explicit formula for $C_{\rho}$ and thus for $C_{adapt}$. Theorem \ref{thm:adaptnoise} proves that for a large enough $C_{adapt}$, the estimator $\widehat{f}^A$ adapts to the best of the two estimators. To use the methodology in practice would require a method to choose $C_{adapt}$ based on the observations, for instance one based on the cross-validation approach discussed in Section~\ref{sec:datadriven}. 

\section{Simulations}
\label{sec:simu}

The aim of this section is to assess the performance of our method on synthetic datasets.

Although good theoretical values for the parameters $m$ and $h$ are given in Theorem~\ref{thm:density}, in practice, other values may produce much better results. As such, a practical implementation of our method consists of two steps: constructing an estimator for several possible parameters $(m,\nu_{\text{est}}, h)$, and then selecting the ``best" parameters $(\hat{m},\hat{\nu}_{\text{est}},\hat{h})$, in the hope that the resulting estimator performs comparably to the best parameter.
Devising a data-driven selection procedure for the parameters $(m,\nu_{\text{est}}, h)$ is outside the scope of this numerical study, although we discuss ways to do it in Section~\ref{sec:datadriven}. For the rest of this section, we select the parameters $(m,\nu_{\text{est}}, h)$ ourselves, to get an idea of how the method performs when the best, or at least good, parameters are selected.

This section is divided into three parts: the first part (Sections~\ref{sec_simus_procedure} and~\ref{sec:examples}) explains how the estimator of the characteristic function and the target density of the signal are computed, introduces six synthetic datasets on which our method is assessed and the graphs of the resulting estimators. The second part (Section~\ref{sec_simus_comparaison}) compares the empirical risk of our estimator with the empirical risk of the estimator defined in \cite{MR3372551} on four other synthetic datasets covering several smoothness scenarios. 
Finally, we discuss how to select the parameters $(m,\nu_{\text{est}},h)$ in Section~\ref{sec:datadriven}.

All simulation codes are available in Python at \textbf{https://sites.google.com/view/jcapitaominiconi}

\subsection{Procedure}
\label{sec_simus_procedure}

We consider real-valued signals, that is $d=1$. For each dataset, we may consider several set of parameters $(m,\nu_{\text{est}},h)$. For each of them, the estimators are computed as follows.

For any integer $m>0$, $\phi \in \Upsilon_{\rho,S}$ and $t \in \Cbb$, $T_m \phi(t) = 1 + \sum_{k=1}^{m} \phi_k t^k$, where the coefficients satisfy $\phi_k \in \Rbb$ when $k$ is even and $i\phi_k \in \Rbb$ when $k$ is odd, due to the fact that for all $\phi \in \Upsilon_{\rho,S}$ and $t\in\Rbb$, $\overline{\phi(t)}=\phi(-t)$. 

For any polynomial $T_m \phi$ with coefficients as above, the integral $M_n(T_m\phi)$ is approximated by a Riemann sum over a regular grid with 8000$\times$8000 points. We minimize $M_n(T_m\phi)$ as a function of the coefficients $(\phi_k)_{1\leq k \leq m}$ with the function \textit{optimize.minimize} from the package \textit{scipy}, starting at the projection of the characteristic function of the signal $T_m \Phi_X$, with otherwise default parameters (for Python version 3.8.10 and \textit{scipy} version 1.10.1). Choosing this initialization is not doable in practical scenarios, but for a proof of concept of our method, it alleviates most known issues of approximate minimization algorithms such as getting stuck in suboptimal minima.
The estimator $\max(0,\widehat{f}_n)$ is computed using~\eqref{eq_inversion_Fourier} over a regular grid with 8000 points, whose bounds depend on the distribution of the signal: $[-5;5]$ for Gaussian signals in Section~\ref{sec:examples} and $[-3;3]$ in Section~\ref{sec_simus_comparaison}, $[-1;2]$ for $\beta(2,2)$ signal, $[-5;10]$ for Gamma signal and $[-5;5]$ for bigamma signal. 

In Section~\ref{sec_simus_comparaison}, we then compute the $L_2$ loss $\|\max(0,\widehat{f}_n)-f\|_2^2$, where $f$ is the target density, through a Riemann sum on the aforementioned regular grid.
Note that taking $\max(0,\widehat{f}_n)$ as our estimator instead of $\widehat{f}_n$ has almost no computational cost and can only improve the quadratic loss. Indeed, the loss  $\|\max(0,\widehat{f}_n)-f\|_2^2$ is upper bounded by the loss $\|\widehat{f}_n-f\|_2^2$  since $\max(0,\widehat{f}_n)$ is the projection of $\widehat{f}_n$ on the convex set of nonnegative functions. Note that the projection of $\widehat{f}_n$ on the set of probability densities, which is convex, would reduce the quadratic loss even further, but computing it is more involved and the result depends on the tails of $\widehat{f}_n$. 



\subsection{Some examples}
\label{sec:examples}


In the following, we write $\beta(2,2)$ the beta distribution with density $f_{\beta}(t) = c_{b} t(1-t)$ for $t \in (0,1)$ where $c_{b}$ is a constant such that $f_{\beta}$ is a density. We write $\mathcal{L}(0)$ the Laplace distribution with mean equal to $0$ and scale 1, that is, the distribution with density $f_{\mathcal{L}(0)}(t) = \frac{1}{2} \exp{(-|t|)}$. The uniform distribution on $(-1,3)$ is denoted by $\mathcal{U}(-1,3)$ and the Dirac distribution with point mass at $-1$ by $\delta_{(-1)}$. 

The six synthetic dataset we consider are composed of $n=500$ i.i.d. observations of model \eqref{eq:model}, one for each of the following settings:
\begin{itemize}
\item[(I)] $X \sim \mathcal{N}(0,1)$, for $i \in \lbrace 1,2 \rbrace$, $\varepsilon^{(i)} \sim \mathcal{N}(0,1)$, 
\item[(II)] $X \sim \mathcal{N}(0,1)$, for $i \in \lbrace 1,2 \rbrace$, $\varepsilon^{(i)} \sim \mathcal{L}(0)$, 
\item[(III)] $X \sim \mathcal{N}(0,1)$, for $i \in \lbrace 1,2 \rbrace$, $\varepsilon^{(i)} \sim \frac{1}{2} \delta_{(-1)} + \frac{1}{2} \mathcal{U}(-1,3)$, 
\item[(IV)] $X \sim \beta(2,2)$, for $i \in \lbrace 1,2 \rbrace$, $\varepsilon^{(i)} \sim \mathcal{N}(0,1)$, 
\item[(V)] $X \sim \beta(2,2)$, for $i \in \lbrace 1,2 \rbrace$, $\varepsilon^{(i)} \sim \mathcal{L}(0)$, 
\item[(VI)] $X \sim \beta(2,2)$, for $i \in \lbrace 1,2 \rbrace$, $\varepsilon^{(i)} \sim \frac{1}{2} \delta_{(-1)} + \frac{1}{2} \mathcal{U}(-1,3)$. 
\end{itemize}
Note that in the settings  (III) and (VI), the noise is not symmetric and its characteristic function vanishes on infinitely many points on the real line, which which is outside of the scope of \cite{MR2396811} and \cite{MR3372551}. 

We computed the estimator with $m = 15$, $\nu_{\text{est}} = 2$ and three different values of $h$. In the case of a Gaussian signal, we choose $h \in \lbrace 1,2,3 \rbrace$, and for the a signal with beta distribution, we choose $h \in \lbrace 5,7,9 \rbrace$. Results for (I) are displayed in 
Figure \ref{fig:GaussGauss}, for (II) in Figure \ref{fig:GaussLaplace}, for (III) in Figure \ref{fig:GaussUnif}, for (IV) in Figure \ref{fig:BetaGauss}, for (V) in Figure \ref{fig:BetaLaplace}, for (VI) in Figure \ref{fig:BetaUnif}.
Since Figures \ref{fig:GaussGauss}, \ref{fig:GaussLaplace}, and \ref{fig:GaussUnif}, as well as Figures \ref{fig:BetaGauss}, \ref{fig:BetaLaplace}, and \ref{fig:BetaUnif} are similar, we only show in this section Figures \ref{fig:GaussUnif} and \ref{fig:BetaUnif}, Figures \ref{fig:GaussLaplace}, \ref{fig:GaussGauss}, \ref{fig:BetaLaplace} and \ref{fig:BetaGauss} can be found in Appendix~\ref{section:App:graphs}.

We note several things: on the one hand, the noise seems to only have a very minor influence on the result of the estimation procedure. Indeed, Figures \ref{fig:GaussGauss}, \ref{fig:GaussLaplace}, and \ref{fig:GaussUnif}, as well as Figures \ref{fig:BetaGauss}, \ref{fig:BetaLaplace}, and \ref{fig:BetaUnif} are very similar even though the noise distributions are very different. On the other hand, the choice of $h$ has a significant impact on the estimated densities. 
Finally, even with a small amount of data ($n = 500$), our estimator manages to recover the density of the signal accurately, provided that $h$ and $\nu_{\text{est}}$ are chosen correctly.

\begin{figure}[ht]
\begin{subfigure}{0.57\textwidth}
\includegraphics[width=\linewidth]{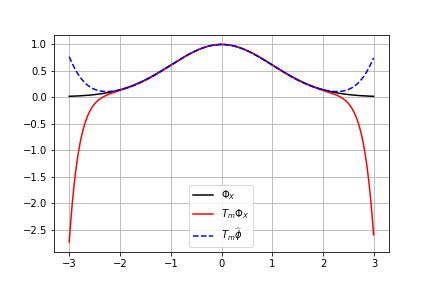}
\caption{Characteristic functions estimation} 
\end{subfigure}\hspace*{\fill}
\begin{subfigure}{0.57\textwidth}
\includegraphics[width=\linewidth]{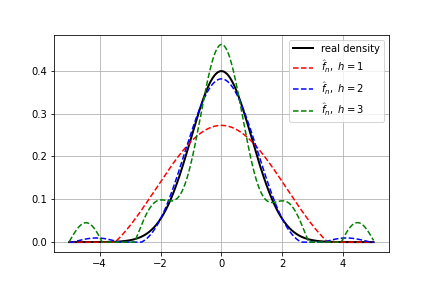}
\caption{Density estimation} 
\end{subfigure}

\caption{$X \sim \mathcal{N}(0,1)$, for $i \in \lbrace 1,2 \rbrace$, $\varepsilon^{(i)} \sim \frac{1}{2} \delta_{(-1)} + \frac{1}{2} \mathcal{U}(-1,3)$} \label{fig:GaussUnif}

\end{figure}

\begin{figure}[ht]
\begin{subfigure}{0.57\textwidth}
\includegraphics[width=\linewidth]{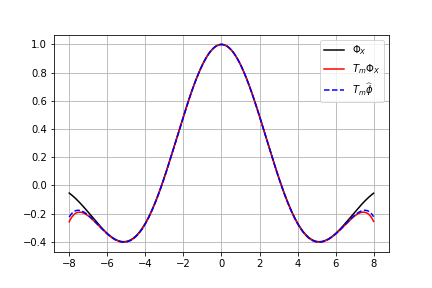}
\caption{Characteristic functions estimation estimation} 
\end{subfigure}\hspace*{\fill}
\begin{subfigure}{0.57\textwidth}
\includegraphics[width=\linewidth]{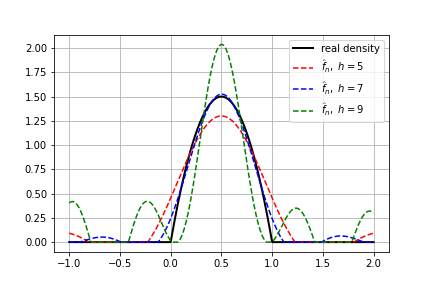}
\caption{Density estimation} 
\end{subfigure}

\caption{$X \sim \beta(2,2)$, for $i \in \lbrace 1,2 \rbrace$, $\varepsilon^{(i)} \sim \frac{1}{2} \delta_{(-1)} + \frac{1}{2} \mathcal{U}(-1,3)$} \label{fig:BetaUnif}

\end{figure}

\newpage

\subsection{Comparison with the estimator of \texorpdfstring{\cite{MR3372551}}{Comte and Kappus, 2015}}
\label{sec_simus_comparaison}

In the following, we compare our estimator with the density estimator presented in \cite{MR3372551}. We denote $\Gamma(\alpha, \beta)$ the gamma distribution with density $f(x) = \frac{\beta^{\alpha}}{\Gamma(\alpha)} x^{\alpha -1} e^{-\beta x} 1|_{(0,\infty)}(x)$. We also denote $b\Gamma(\alpha, \beta, \gamma, \delta)$ the bilateral Gamma distribution with parameters $\alpha$, $\beta$, $\gamma$, $\delta$ which is the distribution of $U-V$ where $U$ and $V$ are independent random variables such that $U$ has distribution $\Gamma(\alpha, \beta)$ and $V$ has distribution $\Gamma(\gamma, \delta)$.
We use the same target densities and errors than in \cite{MR3372551}, that is:
\begin{enumerate}
    \item $X$ has  distribution $\Gamma(4, 2)$ and $\varepsilon$ has distribution $b\Gamma(2, 2, 3, 3)$.
    \item $X$ has distribution  $b\Gamma(1, 1, 2, 2)$ and 
    $\varepsilon + 2$ has distribution $\sim \Gamma(4, 2)$ 
    (the location shift ensure that $\mathbb{E}[\varepsilon] = 0$.)
    \item $X$ has a standard normal distribution, $X \sim \mathcal{N}(0, 1)$ and $\varepsilon \sim b\Gamma(2, 2, 3, 3)$.
    \item $X$ has a standard normal distribution and $\varepsilon$ is the mixture of two normal distributions with parameters $(-2, 1)$ and $(2, 2)$ and equal weights $1/2$.
    We use the notation $\varepsilon \sim m\mathcal{N}(-2, 1, 2, 2)$.
\end{enumerate}

These four scenarios cover three of the possible cases for the smoothness of the signal and noise distributions: the first and second ones have ordinary smooth signal and noise, the third one supersmooth signal and ordinary smooth noise, and the last one supersmooth signal and noise.

To compare our estimator with the one of \cite{MR3372551}, we sample 500 i.i.d. repetitions of $n=1000$ observations, producing estimators $\widehat{f}_n^{(i)}$, $1 \leq i \leq 500$, and compute the empirical risk
\begin{equation}
\label{eq_empirical_risk}
\widehat{r} = \frac{1}{500} \sum_{i=1}^{500} \|\max(0,\widehat{f}_n^{(i)})-f\|_2^2.
\end{equation}
Table~\ref{tab:value} shows the comparison to the empirical risk of the oracle estimator of \cite{MR3372551}, denoted $\widehat{r}_{CK}$ in what follows, reported in their Table 4 (column $\widehat{r}^\text{or})$.

Computing our estimator requires choosing the parameters $(m,\nu_{\text{est}}, h)$. This is done as follows.
We consider the following range of parameters: $m\in\{3;4;5;\dots;15\}$, $h\in\{0.25;0.5;0.75;1;1.25;1.5;1.75;2\}$ and $\nu_{\text{est}}\in\{0.33;0.5;1;1.5;2;2.5;3;3.5;4;4.5\}$. For each possible set of parameters and each scenario, we sample one set of $n=1000$ observations, compute our estimator $\widehat{f}_n$ with these parameters, and compute the loss $\|\max(0,\widehat{f}_n)-f\|_2^2$. 

The resulting losses are given in Tables \ref{tab:GaussMixtureGauss1} to \ref{tab:GaussianBiGamma3} in Appendix~\ref{section:App:losses}. Examination of the tables allows to find a value of the parameters $(m,\nu_{\text{est}}, h)$ for which the error is minimum. We call this value the oracle value of $(m,\nu_{\text{est}}, h)$. Of course, this oracle value depends on the observations: it may change when sampling a new set of observations.

Ideally, comparison with the oracle values in \cite{MR3372551} should be done by computing $\widehat{r}$ using the oracle value of the parameters $(m,\nu_{\text{est}}, h)$ for each repetition. Unfortunately, computing the loss for each of the 1040 possible sets of parameters $(m,\nu_{\text{est}}, h)$ and for each of the 500 repetitions is very time consuming.
Instead, for each scenario, once Tables \ref{tab:GaussMixtureGauss1} to \ref{tab:GaussianBiGamma3} have been computed, we retain only four sets of parameters, given by four of the smallest losses, to run the 500 repetitions, while keeping these parameters distinct enough to retain some variety and ideally robustness. The four sets of parameters for each dataset are indicated in the last column of Table~\ref{tab:value}. This produces estimators $\widehat{f}_n^{(i,j)}$, $1 \leq i \leq 500$, $j \in \{1,2,3,4\}$. Finally, for each $i$, $\widehat{f}_n^{(i)}$ is taken as the one with the lowest loss among these four. This provides an--hopefully close--upper bound of the empirical risk of the oracle estimator, for a considerably reduced computational cost.

\begin{table}[ht]
\centering
\begin{tabular}{|c|c|c|c|c|}
\hline
& $100 \widehat{r}$ & 
C.I. for $100 \Ebb[\widehat{r}]$ & $100 \widehat{r}_{CK}$ & $(m,\nu_{\text{est}},h)$ \\
\hline
\hline
$X \sim \Gamma(4,2)$, & $0.863$ &  
$(0.856, 0.873)$ & $0.34$ & $(15,1,2)$, $(13,1,2)$, \\
$\varepsilon \sim b\Gamma(2,2,3,3)$ & & & & $(14,1,2)$, $(11,1,2)$ \\
\hline
$X \sim b\Gamma(1,1,2,2)$, & $0.990$ &  
$(0.978, 1.002)$ & $0.27$ &$(15,1,2)$, $(14,1,2)$, \\
$\varepsilon \sim \Gamma(4,2)-2$ & & & & $(13,1,2)$, $(15,1,1.75)$ \\
\hline
$X \sim \mathcal{N}(0,1)$, & $0.061$ & 
$(0.060, 0.062)$ & $0.19$ & $(14,3,2)$, $ (13,2.5,2)$, \\
$\varepsilon \sim b\Gamma(2,2,3,3)$ & & & & $(13,1.5,2)$, $(14,3.5,2)$ \\
\hline
$X \sim \mathcal{N}(0,1)$, & $0.064$ & 
$(0.064, 0.065)$ & $1.18$ & $(13,4,2)$, $ (15,3.5,2)$, \\
$\varepsilon \sim m\mathcal{N}(-2,1,2,2)$ & & & & $(14,3.5,2)$, $(14,4,2)$ \\
\hline

\end{tabular}
\caption{Comparison of the empirical risks of our estimator, defined in~\eqref{eq_empirical_risk}, and of the oracle estimator of~\cite{MR3372551}, with 95\% confidence interval for $100\Ebb[\widehat{r}]$, in each scenario. The last column contains the four sets of parameters used to compute our estimator.}
\label{tab:value}
\end{table}



Our procedure outperforms the estimator of \cite{MR3372551} for the two last lines of Table \ref{tab:value}, corresponding to a Gaussian signal.
For the first two lines, in the case of $\Gamma(4,2)$ signal or $b\Gamma(1,1,2,2)$ signal, even though our theory does not apply because the signals have sub-exponential tails, it should be noted that our procedure still works in practice. We illustrate this fact with the graphs of the different densities and their estimates calculated on the dataset used to compute Tables \ref{tab:GaussMixtureGauss1} to \ref{tab:GaussianBiGamma3} (thus ensuring the best selection of the parameters $(m,\nu_{\text{est}}, h)$), presented in Figures \ref{fig:aPlot}, \ref{fig:bPlot}, \ref{fig:cPlot} and \ref{fig:dPlot}.

The question of how to select the set of parameters $(m,\nu_{\text{est}}, h)$ in a data-driven manner is considered in Section \ref{sec:datadriven}.

\begin{figure}[ht]
\begin{subfigure}{0.57\textwidth}
\includegraphics[width=\linewidth]{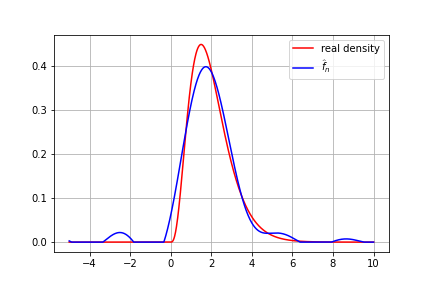}
\caption{\centering
$X \sim \Gamma(4,2)$, $\forall i \in \{1,2\}$,  $\varepsilon^{(i)} \sim b\Gamma(2,2,3,3)$ and $(m,\nu_{\text{est}},h) = (15,1,2)$.} \label{fig:aPlot}
\end{subfigure}\hspace*{\fill}
\begin{subfigure}{0.57\textwidth}
\includegraphics[width=\linewidth]{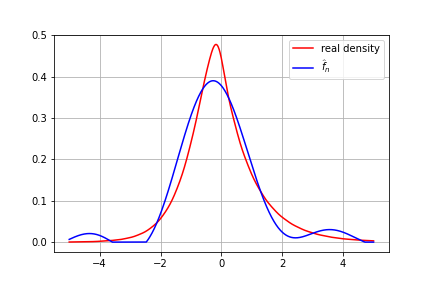}
\caption{\centering
    $X \sim b\Gamma(1,1,2,2)$, $\forall i \in \{1,2\}$,  $\varepsilon^{(i)} \sim \Gamma(4,2)-2$ and  $(m,\nu_{\text{est}},h) = (15,1,2)$.} \label{fig:bPlot}
\end{subfigure}

\caption{Plot of the target density (red) and the estimated density (blue).}
\end{figure}

\begin{figure}[ht]
\begin{subfigure}{0.57\textwidth}
\includegraphics[width=\linewidth]{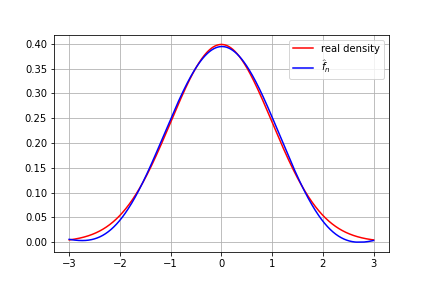}
\caption{\centering
$X \sim \mathcal{N}(0,1)$, $\forall i \in \{1,2\}$,  $\varepsilon^{(i)} \sim b\Gamma(2,2,3,3)$ and $(m,\nu_{\text{est}},h) = (14,3,2)$.} \label{fig:cPlot}
\end{subfigure}\hspace*{\fill}
\begin{subfigure}{0.57\textwidth}
\includegraphics[width=\linewidth]{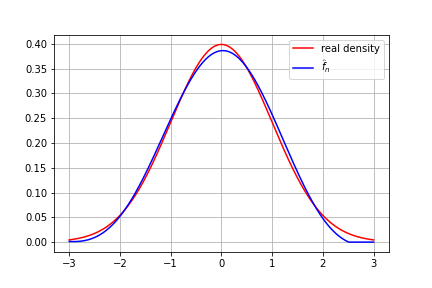}
\caption{\centering
$X \sim \mathcal{N}(0,1)$, $\forall i \in \{1,2\}$,  $\varepsilon^{(i)} \sim m\mathcal{N}(-2,1,2,2)$ and $(m,\nu_{\text{est}},h) = (13,4,2)$.} \label{fig:dPlot}
\end{subfigure}

\caption{Plot of the target density (red) and the estimated density (blue).}
\end{figure}

\newpage

\subsection{Data-driven selection of the parameters \texorpdfstring{$(m, \nu_{\text{est}},h)$}{}}
\label{sec:datadriven}

Theorem \ref{thm:density} states that $m$ has to be chosen of the order of $\log(n)/ \log(\log(n))$ when $n$ is large enough, with a constant depending on $\rho$.
In practice, and especially for small values of $n$, such a choice might not produce the best results, and thus we need a way to select a value of $m$ based on the observations themselves.
For instance, for $n=1000$, Theorem \ref{thm:density} gives $m \simeq \rho$, but the algorithm works better when we significantly increase the value of $m$. This is why we choose to consider values of $m$ between 3 and 15.

Considering the illustrations of Section \ref{sec:examples}, we also note that, given a fixed value of $\nu_{\text{est}}$, the value of $h$ that works best corresponds to the value of the point where $T_m \widehat{\phi}$ starts diverging from $\Phi_X$.
Finally, we believe that there exists an optimal $\nu_{\text{est}}$ to approximate the characteristic function $\Phi_X$, in the sense that there exists a $\nu_{\text{est}}^{\star}$ which minimizes the distance $\| \Phi_X - T_m \widehat{\phi} \|_{2,\nu_{\text{est}}}$, as can be seen in Figure \ref{fig:OptimalNu} in Appendix \ref{section:App:graphs}, where we took $m=4$.


Considering Tables \ref{tab:GaussMixtureGauss1} - \ref{tab:GaussianBiGamma3}, we note that for a Gaussian signal, the values of the loss remain relatively stable when varying the parameters 
$(m, \nu_{\text{est}}, h)$. We computed Tables \ref{tab:GaussianBiGammaMC1}-\ref{tab:GaussianBiGammaMC3} using another sample
in the case $X \sim \mathcal{N}(0,1)$, $\varepsilon \sim b\Gamma(2,2,3,3)$. This also illustrates the variability of the oracle set of parameters, as the set of parameters chosen based on Table \ref{tab:value} is not the best ones for this new sample, instead it is $(m, \nu_{\text{est}}, h) = (15,3.5,2)$.

In contrast, for a signal following a $\Gamma$ or $b\Gamma$ distribution, Tables \ref{tab:BiGammaGamma1} - \ref{tab:GammaBiGamma3} show that a careless choice of parameters can result in the loss exploding, for instance when taking large values of $m$.
However, we have observed when computing Table \ref{tab:value} that the oracle losses remain fairly stable across different samples for a $\Gamma$ or $b\Gamma$ signal, as can be seen from the relative size of the confidence intervals.


Thus, there is a need for a data driven choice of $(m,\nu_{\text{est}}, h)$. Here is a proposition. For each set of parameters $H=(m,\nu_{\text{est}}, h)$, compute an estimator of the Fourier transform of the distribution of $X$ given by
$\widehat{\Phi}_{H}$, obtained by minimizing $M_n (T_m \phi)$ with $M_n$ defined using $\nu_{\text{est}}$, and truncated to $0$ for $|t| \geq h$. This is what is used to compute $\widehat{f}_{H}$. Then, considering now the noise as the unknown signal, one can use the methodology proposed in \cite{MR2853732} to estimate the density of the noise. We denote $q$ the parameter $m$ in the procedure of \cite{MR2853732}, which also needs to be chosen based on the data. Let $\widehat{g}_{H,q,1}$ be the estimator of the density of the noise $\varepsilon^{(1)}$ using the first coordinate of the observations, and $\widehat{g}_{H,q,2}$ be the estimator of the density of the noise $\varepsilon^{(2)}$ using the second coordinate of the observations. 
This provides estimators $\widehat{p}_{H,q,1}=\widehat{f}_{H} * \widehat{g}_{H,q,1}$ of the density of the first coordinate of the observations
and $\widehat{p}_{H,q,2}=\widehat{f}_{H} * \widehat{g}_{H,q,2}$ of the density of the second coordinate of the observations. Here, $\widehat{f}_{H} * \widehat{g}_{H,q,i}$ denotes the convolution of the functions $\widehat{f}_{H}$ and $\widehat{g}_{H,q,i}$. Now, by partitioning the set of observations in three blocks with indices respectively in $E_1$, $E_2$ and $T$, one can compute the estimator $\widehat{f}_{H}$  using the observations with indices in $E_1$, compute the estimators $\widehat{g}_{H,q,1}$ and $\widehat{g}_{H,q,1}$ using the observations with indices in $E_2$, and then compute a cross-validation criterion
\begin{equation*}
CV(H,q)=\sum_{i\in T}\left(\log \widehat{p}_{H,q,1}(Y_{i}^{(1)})+\log \widehat{p}_{H,q,2}(Y_{i}^{(2)})\right).
\end{equation*}
Finally, we choose $H$ maximizing $H\mapsto \max \{CV(H,q), q\in \lbrace k/4 \pi, \ k \in \Nbb^{\star}, \ k \leq n^{1/4} \rbrace \}$.



\section{Discussion and perspectives}
\label{sec:discussion}

In this paper, we considered the repeated measurements model with two repetitions of the unknown signal, and we proposed a new estimation procedure for the distribution of the signal. Our estimator is consistent without any assumption on the noise distribution provided the noise has independent components, and for signals having a Laplace transform at $\lambda$ with growth at most $\exp(b \|\lambda\|^\rho)$ for some $\rho >0$. We provided theoretical results about minimax rates and adaptation procedures 
and presented simulation experiments.

In the case where more repetitions are available, it is possible to extend our procedure to take the additional information into accounts. We have investigated the possibility of using the sum of elementary criteria built using only groups of two repetitions as criterion to estimate the characteristic function of the signal. With this approach, taking $p > 2$ repetitions does not improve the rates of convergence of the estimators, only the constants. What could change the rates would be to take $p_n \rightarrow +\infty$ together with $n$; depending on the relationship between $n$ and $p_n$, we expect a transition to occur between our regime (which has logarithmic rates in $n$) and the limit where $p_n$ is considerably larger than $n$ ($p_n \gg n^\alpha$ for some large enough $\alpha > 0$), where the signal can be recovered by simple average and the problem devolves into standard density estimation (which has polynomial rates in $n$).

One important result of our findings is the surprisingly good behaviour of our estimator in simulations. These findings open up new avenues of research, based on the following questions:
\begin{itemize}
\item Is it possible to extend the identifiability result to nonparametric classes of distributions having heavier tails? 

\item Is it possible to adapt the choice of $(m,\nu_{\text{est}}, h)$ on classes of distributions involving ordinary smooth distributions for the noise and/or for the signal, when the decay of the characteristic function is known? In particular, slower decay should allow one to take larger values of $\nu_{\text{est}}$ and $h$. Can this adaptation be done using data-driven choices of the hyperparameters $(m,\nu_{\text{est}}, h)$?

\item How to design a computationally efficient implementation of the procedure? In particular, a data driven choice of the parameters such as the one in Section~\ref{sec:datadriven} requires the computation of the estimator for each set of parameters to be done.
\end{itemize}

\section{Proofs}
\label{sec:proofs}

\subsection{Proof of Theorem \ref{thm:id}}
\label{proof:thm:id}

In the following, we will write $\phi_i$ (resp. $\tilde{\phi_i}$) the characteristic function of $\varepsilon^(i)$ (resp $\tilde{\varepsilon}(i)$) for $i \in \lbrace 1,2 \rbrace$.
Since the distribution of $Y$ is the same under $\Pbb_{r(X)} * \Qbb$ and $\Pbb_{r(X')} * \tilde{\Qbb}$, for any $(t_1, t_2) \in \Rbb^d \times \Rbb^d$,
\begin{equation}
\label{eq:Y}
    \Phi_X(t_1 + t_2) \phi_1(t_1) \phi_2(t_{2}) = \Phi_{X'}(t_1 + t_2)  \tilde{\phi}_1(t_1) \tilde{\phi}_2(t_{2}),
\end{equation}
and 
by taking $t_2=0$, and $t_1=0$ in \eqref{eq:Y}, also
\begin{equation}
\label{eq:Y(1)}
    \Phi_X(t_1) \phi_1(t_1) = \Phi_{X'}(t_1) \tilde{\phi}_1(t_1),
\end{equation}
and
\begin{equation}
\label{eq:Y(2)}
    \Phi_X(t_{2}) \phi_2(t_2) = \Phi_{X'}(t_2) \tilde{\phi}_2(t_2).
\end{equation}
There exists a neighborhood $V$ of $0$ in $\Rbb^d \times \Rbb^d$ such that for all $(t_1, t_2) \in \Rbb^d \times \Rbb^d$, $\phi_1(t_1) \neq 0$, $\tilde{\phi}_1(t_1) \neq 0$, $\phi_2(t_{2}) \neq 0$ and $\tilde{\phi}_2(t_{2}) \neq 0$, so that, equations \eqref{eq:Y},  \eqref{eq:Y(1)} and \eqref{eq:Y(2)} imply that for any $(t_1, t_2) \in V$,
\begin{eqnarray*}
    & & \Phi_X(t_1 + t_{2}) \Phi_{X'}(t_1) \Phi_{X'}(t_{2}) = \Phi_{X'}(t_1 + t_2) \Phi_{X}(t_1) \Phi_{X}(t_2).
\end{eqnarray*}
Since $(z_1,z_2) \in \Cbb^d \times \Cbb^d \mapsto \Phi_{X}(z_1+z_2) \Phi_{X'}(z_1) \Phi_{X'}(z_2) - \Phi_{X'}(z_1+z_2) \Phi_{X}(z_1) \Phi_{X}(z_2) $ is a multivariate analytic function 
which is zero on a purely real neighborhood of $0$, then it is the null function on the whole multivariate complex space, see Lemma 25 in \cite{LCGL2021}, so that, for any $z_1, z_2$ in $\Cbb^d$,
\begin{equation}
\label{eq:id}
    \Phi_{X}(z_1+z_2) \Phi_{X'}(z_1) \Phi_{X'}(z_2) = \Phi_{X'}(z_1+z_2) \Phi_{X}(z_1) \Phi_{X}(z_2).
\end{equation}
Taking $z_2 = -z_1$, since $\Phi_{X}(0) = 1$ and $\Phi_{X}(-z_1) = \overline{\Phi_{X}(z_1)}$,
we get that, for all $z \in \Cbb^d$, 
\begin{equation}
\label{eq:module}
        |\Phi_{X'}(z)| = |\Phi_{X}(z)|.
\end{equation}
We set $R(z)=|\Phi_{X'}(z)| = |\Phi_{X}(z)|$ and define $\Theta(z) \in (-\pi, \pi]$ and  $\tilde{\Theta}(z) \in (-\pi,\pi]$ such that 
$\Phi_{X}(z) = R(z) \exp{(i \Theta(z))}$ and $\Phi_{X'}(z) = R(z) \exp{(i \tilde{\Theta}(z))}$. 
The functions $R$, $\theta$ and $\tilde{\Theta}$ are continuous on the open set where $R(z)\neq 0$, which includes a neighborhood of $0$ since $R(0)=1$. Note that  $\Theta(0) = 0$ and $\tilde{\Theta}(0) = 0$. Thus there exists $\delta$ such that $0 < \delta < \pi/6$, such that there there exist a neighborhood $A_{\delta}$ of $0$ in $\Cbb^d$, such that for all $(z_1,z_2)\in A_{\delta}^2$ and all $z\in A_{\delta}$, $R(z\neq 0$, $R(z_1+z_2) \neq 0$, and also 
$\Theta(z) \in (-\delta, \delta)$, $\tilde{\Theta}(z) \in (-\delta, \delta)$, $\Theta(z_1+z_2) \in(-\delta, \delta)$, and $\tilde{\Theta}(z_1+z_2) \in(-\delta, \delta)$.
Using equations \eqref{eq:id} and \eqref{eq:module}, we get that for all $z_1 \in A_{\delta}$ and $z_2 \in A_{\delta}$, 
\begin{equation*}
    \exp \lbrace i ( \Theta(z_1+z_2) + \tilde{\Theta}(z_1) + \tilde{\Theta}(z_2) ) \rbrace =  \exp \lbrace i ( \tilde{\Theta}(z_1+z_2) + \Theta(z_1) + \Theta(z_2) ) \rbrace, 
\end{equation*}
which gives
\begin{equation}
\label{eq:angle}
    \exp \lbrace i ( \Theta(z_1+z_2) + \tilde{\Theta}(z_1) + \tilde{\Theta}(z_2) ) -i ( \tilde{\Theta}(z_1+z_2) + \Theta(z_1) + \Theta(z_2) )\rbrace =  1.
\end{equation}
But since for all 
$z_1 \in A_{\delta}$ and $z_2 \in A_{\delta}$,
\begin{equation*}
    -6 \varepsilon < \Theta(z_1+z_2) - \tilde{\Theta}(z_1+z_2) + \tilde{\Theta}(z_1) - \Theta(z_1) + \tilde{\Theta}(z_2) - \Theta(z_2) < 6 \varepsilon,
\end{equation*}
\eqref{eq:angle} implies that for all $z_1 \in A_{\delta}$ and $z_2 \in A_{\delta}$,
\begin{equation*}
    \Theta(z_1+z_2) - \tilde{\Theta}(z_1+z_2) = \Theta(z_1) - \tilde{\Theta}(z_1) + \Theta(z_2) - \tilde{\Theta}(z_2).
\end{equation*}
Now using Theorem 2 in \cite{MR0208210} we get that there exists $\alpha \in \Rbb^d$ such that for all $z \in A_{\delta} \cap \Rbb^d$,  $\Theta(z) = \tilde{\Theta}(z) + \alpha^\top z$. Thus, for all $z \in A_{\delta} \cap \Rbb^d$, $\Phi_{X}(z) - \Phi_{X'}(z) \exp{(i \alpha^\top z)} = 0$. Since the function $z \in \Cbb^d \mapsto \Phi_{X}(z) - \Phi_{X'}(z) \exp{(i \alpha^\top z)}$ is a multivariate analytic function of $d$ variables which is zero on a purely real neighborhood of $0$, then it is the null function on the whole multivariate complex space, that is, 
\begin{equation*}
    \forall z \in \Cbb^d,\;\Phi_{X}(z) = \Phi_{X'}(z) \exp{(i \alpha^\top z)} = \Phi_{X'+\alpha}(z)
\end{equation*}
which ends the proof.

\subsection{Proof of Proposition \ref{prop:rate:char}}
\label{proof:thm:char}

First, note that all arguments of the proof of Proposition 6 in \cite{InferSupport} go through with unbounded $\rho_0$ as soon as identifiability holds. We choose $\Hcal$ as the set of multivariate analytic functions $f : \Cbb^{2d} \longrightarrow \Cbb$   such that there exists $F : \Cbb^{d} \longrightarrow \Cbb$ such that  forall $(t_1,t_2) \in \Cbb^d \times \Cbb^d$, $f(t_1,t_2) = F(t_1+t_2)$, for which identifiability holds without any bound on $\rho$ as our Theorem \ref{thm:id} proves. Also, the restrictions to $\Rbb^{2d}$ of functions in $\Hcal$  is a closed subset of $L_2([-\nu,\nu]^{2d})$. 
We then apply Proposition 6 in \cite{InferSupport} with this set $\Hcal$ after noticing that
\begin{equation*}
    \int_{[-\frac{\nu}{2},\frac{\nu}{2}]^d} |\widehat \Phi_{n,\rho'}(t) - \Phi_X(t)|^2 dt \leq \frac{1}{(2\nu)^d} \int_{[-\nu,\nu]^d \times [-\nu,\nu]^d} |\widehat \Phi_{n,\rho'}(t_1+t_2) - \Phi_X(t_1+t_2)|^2 dt_1dt_2.
\end{equation*}

\subsection{Proof of Theorem \ref{thm:density}}
\label{proof:thm:density}

In the following, for any positive real number $\alpha$, we write $\|f\|^2_{2,\alpha} = \int_{-\alpha}^{\alpha} |f(u)|^2 du$.
We can easily show that

\begin{align*}
\| \widehat{f}_{n,\rho} - f \|^2_2 & \leq \frac{1}{(4 \pi^2)^d} \|T_{m_{n,\rho}} \widehat \Phi_{n,\rho} - \Phi_X \|^2_{2,h_{n,\rho}} + \frac{1}{(4\pi^2)^d} \int_{\Rbb^d \setminus [-h_{n,\rho},h_{n,\rho}]^d} |\Phi_X(u)|^2 du\\
& \leq \frac{1}{(4 \pi^2)^d} \|T_{m_{n,\rho}} \widehat \Phi_{n,\rho} - \Phi_X \|^2_{2,h_{n,\rho}} + \frac{1}{(4 \pi^2)^d}\frac{c_{\beta}}{(1 + h_{n,\rho}^2)^{\beta}}\\
& \leq 2 \max\left\{ \frac{1}{(4 \pi^2)^d} \|T_{m_{n,\rho}} \widehat \Phi_{n,\rho} - \Phi_X \|^2_{2,h_{n,\rho}} 
, \frac{1}{(4 \pi^2)^d}\frac{c_{\beta}}{(1 + h_{n,\rho}^2)^{\beta}}\right\}.
\end{align*}
The end of the proof follows the proof of Theorem 3.2 of \cite{LCGL2021}, after noticing that all arguments go through with unbounded $\rho_0$.

\subsection{Proof of Theorem \ref{thm:lowerbound}}
\label{proof:thm:lowerbound}

Before to give the proof of Theoreme \ref{thm:lowerbound}, let us set the framework. \\ Assume that the coordinate of $\varepsilon$ are independent identically distributed with density
\begin{equation*}
    g : x \mapsto c_g \frac{1 + \cos(cx)}{(\pi^2 -(cx)^2)^2}
\end{equation*}
for some $c>0$, where $c_g$ is such that $g$ is a probability density with characteristic function
\begin{equation*}
    \Fcal[g] : t \mapsto \left [ (1- \Bigg | \frac{t}{c} \Bigg |) \cos(\pi \frac{t}{c}) + \frac{1}{\pi} \sin(\pi \Bigg | \frac{t}{c} \Bigg | )  \right ] 1_{[-c,c]}(t).
\end{equation*}
With an adequate choice of $c$, $Q \in \Qcal^{(2d)}(\nu,c(\nu),E)$.
Let $f_0$ and $f_n$ be the two different probability densities of $X$ and $\Qbb$ the distribution of the noise $\varepsilon$. We write $\Pbb_0$ and $\Pbb_n$ the distribution of $r(X)$ for $X$ having probability density $f_0$ and $f_n$.
Then, the minimax risk is lower bounded by
\begin{equation}
\label{borneinf}
    \frac{1}{4} \| f_0 - f_n \|^2_{L_{2}(\Rbb^d)} \left [ 1 - \frac{1}{2} \| (\Pbb_0 * \Qbb)^{\otimes n} - (\Pbb_n * \Qbb)^{\otimes n} \|_{L_1(\Rbb^{2d})} \right ]
\end{equation}
Let us now provide the chosen densities $f_0$ and $f_n$.
Let $u : x \in \Rbb \mapsto c_u \exp{(-1/(1-x^2))}1_{(-1,1)}(x)$  with $c_u$ constant such that $u$ is a probability density. 
For all $b > 0$ and $x \in \Rbb$, define $u_b(x) = b u(bx)$. For $x \in \Rbb$ and a sequence $(K_n)_n$, 
define $\zeta_0 = u_b(x)$ and $\zeta_n(x) = \zeta_0(x) + \alpha_n (P_{K_{n}} * u_b)(x)$ where for $k \geq 0$ and $P_k$ is the Legendre polynomial of order $k$ in $(-1,1)$. Note that $\zeta_n$ is nonnegative as soon as $\alpha_n \leq 1/\| P_{K_{n}} \|_{\infty}$. 
Moreover, $\zeta_0$ has integral equal to 1 and since $P_{K_{n}}$ is orthogonal to $P_0$ (which is a constant function) in $L_2(\Rbb)$ the integral of $P_{K_{n}} * u_b$ is zero, then $\zeta_n$ is a probability density. 
now define, for all $x \in \Rbb^d$,
\begin{equation*}
f_0(x) = \prod_{j=1}^d \zeta_0(x_j) \ \text{and} \ f_n(x) = \zeta_n(x_1) \prod_{j=2}^d \zeta_0(x_j).
\end{equation*}
Let us recall some useful properties that can be found in \cite{LCGL2021} and \cite{MR2319471}.\\
First, $f_0$ and $f_n$ are probability densities with characteristic function in $\Upsilon_{\rho,S,d}$ (see Lemma 14 in \cite{LCGL2021}).  Since for $i \in \lbrace 0, 1 \rbrace$, $t=(t_1,t_2) \in \Rbb^{d} \times \Rbb^{d}$, $\Fcal[\Pbb_i](t) = \Fcal[f_i](t_1+t_2)$, $\Pbb_0$ and $\Pbb_n$ are probability distribution with characteristic function in $\Upsilon_{\rho,S,2d}$.
Then,
noticing that $\sup_{(t_1,t_2) \in \Rbb^d \times \Rbb^d} |\Fcal[u](t_1+t_2)|^2 (1 + \|t_1 + t_2\|^2)^{\beta}$ is finite,  the same proof as that of Theorem 2 in \cite{MR2319471} gives that, for all $\beta > 0$, $c_{\beta}>0$, there exists $x_0>0$ and $C_0>0$ such that $\Fcal[\Pbb_0]$ and $\Fcal[\Pbb_n]$ belong to $\Psi(1,T,\beta,c_{\beta})$ as soon as the two following assumptions are met
\begin{equation}
\label{eq:cond1}
\alpha_n \leq C_0 b_n^{-\beta} K_n^{1/2} \ \text{and} \ \alpha_n \leq 1/\| P_{K_{n}} \|_{\infty}.
\end{equation}

Finally, as it is used in \cite{LCGL2021}, we have
\begin{equation*}
     1 - \frac{1}{2} \| (\Pbb_0 * \Qbb)^{\otimes n} - (\Pbb_n * \Qbb)^{\otimes n} \|_{L_1(\Rbb^{2d})} \geq \left ( 1-\frac{1}{2} \|(\Pbb_0 * \Qbb) - (\Pbb_n * \Qbb) \|_{L_1(\Rbb^{2d})} \right )^n
\end{equation*}
Thus, using \eqref{borneinf}, if $(\alpha_n)_n$, $(b_n)_n$ and $(K_n)_n$ are chosen such that 
\begin{equation}
\label{eq:diff1/n}
    \int_{\Rbb^{2d}} |(\Pbb_0 * \Qbb)(x) - (\Pbb_n * \Qbb)(x)|dx = O\left( \frac{1}{n} \right)
\end{equation}
with \eqref{borneinf}, the minimax risk based on $n$ observations is lower bounded by $c \|f_0 - f_n\|_{L_2(\Rbb^d)}$ for some constant $c>0$. We prove in Lemma \ref{lemma:diffDistr}  below that a good choice of $K_n$ makes \eqref{eq:diff1/n} hold true. 
Then we have
\begin{equation}
\label{eq:distThm}
    \|f_0-f_n\|_{L_{2}(\Rbb^d)}^2 = \alpha_n^2 \|\zeta_0\|_{L_{2}(\Rbb)}^{2(d-1)} \|\zeta_n - \zeta_0\|_{L_{2}(\Rbb)}^2,
\end{equation}
so that using $\|\zeta_0\|_{L_{2}(\Rbb)}^{2(d-1)} = 1$ and the end of proof of Theorem 2 in \cite{MR2319471}, $\|\zeta_n - \zeta_0\|_{L_{2}(\Rbb)}^2 \geq K_n^{-2 \beta}$ by choosing $b_n = c_b K_n$ and $\alpha_n = C_0 b_n^{-\beta} K_n^{1/2} $ for $c_b$ small enough. \eqref{eq:distThm} leads to to

\begin{equation*}
    \|f_0-f_n\|_{L_{2}(\Rbb^d)}^2 \geq K_n^{-2\beta} = C \left(\frac{\log\log(n)}{\log(n)} \right)^{-2\beta}
\end{equation*}
for some constant $C>0$. The condition $\alpha_n \leq 1/\|P_{K_{n}} \|_{\infty}$ corresponds to $\beta \geq 1/2$.
\begin{lemma}
\label{lemma:diffDistr}
Let
\begin{equation}
\label{eq:Kn}
K_n = C_0 \left( \frac{\log(n)}{\log \log(n)} \right)
\end{equation}
then, for $\alpha_n \leq 1$ and $b_n \geq 1$,
\begin{equation*}
    \int_{\Rbb^{2d}} |(\Pbb_0 * \Qbb)(x) - (\Pbb_n * \Qbb)(x)|dx = O\left( \frac{1}{n} \right)
\end{equation*}
\end{lemma}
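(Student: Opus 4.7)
The plan is to pass to the Fourier domain, exploit the bounded support of $\Fcal[g]$, use the near-vanishing of $\Fcal[P_{K_n}]$ on a bounded interval to obtain an $L_2$-estimate, and then convert it to $L_1$ via a truncation argument. Writing $\delta f := f_n - f_0 = \alpha_n (P_{K_n} * u_{b_n})(x_1) \prod_{j=2}^{d} u_{b_n}(x_j)$ and letting $g^{\otimes d}(x) := \prod_{j=1}^{d} g(x_j)$ denote the density of each $\varepsilon^{(i)}$, the characteristic function of $r(X) = (X, X)$ at $(t_1, t_2)$ is $\Fcal[f_X](t_1 + t_2)$, and independence of $\varepsilon^{(1)}, \varepsilon^{(2)}$ yields
\begin{equation*}
\Fcal[\Pbb_n * \Qbb - \Pbb_0 * \Qbb](t_1, t_2) = \Fcal[\delta f](t_1 + t_2) \, \Fcal[g^{\otimes d}](t_1) \, \Fcal[g^{\otimes d}](t_2),
\end{equation*}
which is supported in $[-c, c]^{2d}$ since $\Fcal[g]$ is supported in $[-c, c]$. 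Because $|\Fcal[u](\cdot/b_n)| \leq \|u\|_{L_1} = 1$, the factorized structure of $\Fcal[\delta f]$ gives $|\Fcal[\delta f](s)| \leq \alpha_n |\Fcal[P_{K_n}](s_1)|$ for every $s \in [-2c, 2c]^d$.

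The crucial step is a sharp bound on $|\Fcal[P_{K_n}](s)|$ for $|s| \leq 2c$. By orthogonality of Legendre polynomials, $\int_{-1}^{1} x^k P_{K_n}(x)\, dx = 0$ for $k < K_n$, so Taylor-expanding the exponential yields
\begin{equation*}
\Fcal[P_{K_n}](s) = \sum_{k \geq K_n} \frac{(is)^k}{k!} \int_{-1}^{1} x^k P_{K_n}(x)\, dx.
\end{equation*}
Bounding $\left|\int_{-1}^{1} x^k P_{K_n}(x)\, dx\right| \leq 2/(k+1)$ (via $\|P_{K_n}\|_{\infty} = 1$) and applying Stirling's formula gives $\sup_{|s| \leq 2c} |\Fcal[P_{K_n}](s)| \leq C (2ec/K_n)^{K_n}$. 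Plugging in $K_n = C_0 \log(n)/\log\log(n)$, the right-hand side equals $n^{-C_0(1 + o(1))}$ as $n \to \infty$. By Plancherel, using $\alpha_n \leq 1$ and $\|\Fcal[g^{\otimes d}]\|_{L_2}^2 = (2\pi)^d \|g\|_{L_2}^{2d} < \infty$ (since $g(x) \lesssim (1 + x^2)^{-2}$), we conclude that $\|h\|_{L_2(\Rbb^{2d})} = O(n^{-C_0(1+o(1))})$, where $h := \Pbb_n * \Qbb - \Pbb_0 * \Qbb$.

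To convert this $L_2$-bound into the required $L_1$-bound, decompose
\begin{equation*}
\|h\|_{L_1} = \int_{\|u\|_\infty \leq R} |h(u)|\, du + \int_{\|u\|_\infty > R} |h(u)|\, du.
\end{equation*}
Cauchy--Schwarz bounds the first integral by $(2R)^d \|h\|_{L_2}$. For the tail, the pointwise estimate $|h(u_1, u_2)| \leq \int |\delta f(z)| g^{\otimes d}(u_1-z) g^{\otimes d}(u_2-z)\, dz$ combined with $\|\delta f\|_{L_1} \leq 2$, the compact support of $\delta f$, and the tail bound $\int_{|y| > R/2} g(y)\, dy \lesssim R^{-3}$ (from $g(x) \lesssim (1+x^2)^{-2}$) yield via Fubini $\int_{\|u\|_\infty > R} |h| \lesssim R^{-3}$. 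Choosing $R = n^{1/3}$ and $C_0 \geq d/3 + 1$ makes both contributions $O(1/n)$, which is the claim. The main obstacle is the sharp Legendre estimate above, which requires exploiting the orthogonality to kill all Taylor coefficients of order below $K_n$ and then using Stirling to beat the polynomial growth of $s^k$ against $K_n^{K_n}$.
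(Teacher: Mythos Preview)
Your proof is correct, and it rests on the same core estimate as the paper's: the orthogonality of $P_{K_n}$ to all monomials of degree below $K_n$ kills the low Taylor coefficients of $\Fcal[P_{K_n}]$, and Stirling then yields $\sup_{|s|\le 2c}|\Fcal[P_{K_n}](s)|\le C(2ec/K_n)^{K_n}=n^{-C_0+o(1)}$.

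The two arguments diverge in how they pass from this Fourier-side control to the $L_1$ bound on $h=\Pbb_n*\Qbb-\Pbb_0*\Qbb$. The paper invokes a weighted Sobolev-type inequality (Lemma~16 of \cite{LCGL2021}) bounding $\|h\|_{L_1}$ by the $L_2$-norms of $\Fcal[h]$ and all its mixed first-order partials; this forces them to control $\Fcal[P_{K_n}]'$ and $\Fcal[P_{K_n}]''$ as well, via the same mechanism shifted by one or two degrees. Your route is more self-contained: Plancherel yields $\|h\|_{L_2}$ directly, and a truncation at radius $R=n^{1/3}$ together with the explicit tail $g(x)=O(x^{-4})$ closes the $L_2$-to-$L_1$ gap. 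You thereby avoid both the external lemma and the derivative bounds, at the modest cost of tracking the decay of $g$; the paper's route is cleaner once that lemma is available, but yours is arguably more elementary.

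One minor point: the lower-order term in $(2ec/K_n)^{K_n}=n^{-C_0+o(1)}$ carries a positive sign (coming from the $\log\log\log n$ contribution in $K_n\log K_n$), so to be safe you need $C_0$ strictly larger than $d/3+1$ rather than $\ge$. Since the lemma only asserts existence of a suitable constant, this is harmless.
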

\begin{proof}
Following the proof of Lemma 16 of \cite{LCGL2021}, we have for all $\eta \in \lbrace 0,1 \rbrace^{2d}$,
\begin{equation*}
    \int_{\Rbb^{2d}} |(\Pbb_0 * \Qbb)(x) - (\Pbb_n * \Qbb)(x)|dx \leq \left ( \sum_{0 \leq \eta' \leq \eta} \int_{\Rbb^{2d}} |\Bigg( \prod_{j=1}^{2d} \partial_j^{\eta'_{j}} \Bigg) ((\Fcal[\Pbb_0] - \Fcal[\Pbb_n]) \Fcal[\Qbb])(t)|^2 dt \right )^{1/2}
\end{equation*}
Since for $x = (x_1,x_2) \in \Rbb^{2d}$ with $x_1 \in \Rbb^d$ and $x_2 \in \Rbb^d$, we have $\Fcal[\Pbb_0](x) = \Fcal[f_0](x_1+x_2)$ and $\Fcal[\Pbb_n](x) = \Fcal[f_n](x_1+x_2)$,
\begin{multline*}
    \sum_{0 \leq \eta' \leq \eta} \int_{\Rbb^{2d}} |\Bigg( \prod_{j=1}^{2d} \partial_j^{\eta'_{j}} \Bigg) ((\Fcal[\Pbb_0] - \Fcal[\Pbb_n]) \Fcal[\Qbb])(t)|^2 dt \\
    = \sum_{0 \leq \eta' \leq \eta} \int_{\Rbb^{d} \times \Rbb^{d}} |\Bigg( \prod_{j=1}^{2d} \partial_j^{\eta'_{j}} \Bigg) (\Fcal[f_0] - \Fcal[f_1]) (t_1+t_2) \Fcal[\Qbb](t_1,t_2)|^2 d(t_1,t_2).
\end{multline*}
Since $\Fcal[g]$ and $\Fcal[g]'$ are supported on $[-c,c]$, we have for all $\eta \in \lbrace 0,1 \rbrace^{2d}$, 
\begin{align*}
    &\sum_{0 \leq \eta' \leq \eta} \int_{\Rbb^{d} \times \Rbb^{d}} |\Bigg( \prod_{j=1}^{2d} \partial_j^{\eta'_{j}} \Bigg) (\Fcal[f_0] - \Fcal[f_1]) (t_1+t_2) \Fcal[Q](t_1,t_2)|^2 d(t_1,t_2) \\
    &\leq c_d \sum_{0 \leq \eta' \leq \eta} \int_{[-c,c]^d \times [-c,c]^d} |\Bigg( \prod_{j=1}^{2d} \partial_j^{\eta'_{j}} \Bigg) (\Fcal[f_0] - \Fcal[f_1]) (t_1+t_2)|^2 dt_1 dt_2\\
    &= c_d \sum_{0 \leq \eta' \leq \eta} \int_{[-c,c]^d \times [-c,c]^d} |\Bigg( \prod_{j=1}^{2d} \partial_j^{\eta'_{j}} \Bigg) (\Fcal[\zeta_0] - \Fcal[\zeta_n])(x_1 + x_{d+1}) \prod_{j=2}^{d} \Fcal[\zeta_0](x_j + x_{j+d}) |^2 dx \\
    &= c_d \sum_{0 \leq \eta' \leq \eta} \int_{[-c,c]^d \times [-c,c]^d} |(\Fcal[\zeta_0] - \Fcal[\zeta_n])^{(\eta'_{1} + \eta'_{d+1})}(x_1 + x_{d+1})|^2 \prod_{j=2}^{d} |\Fcal[\zeta_0]^{(\eta'_{j} + \eta'_{j+d})}(x_j + x_{j+d})|^2 dx 
\end{align*}
Finally, there exists a constant $c_d$ such that
\begin{multline*}
     \int_{\Rbb^{2d}} |(\Pbb_0 * \Qbb)(x) - (\Pbb_n * \Qbb)(x)|dx  \leq c_d \Bigg ( \int_{[-c,c]^2} |(\Fcal[\zeta_0] - \Fcal[\zeta_n])(x+y)|^2 dxdy  \\
     + \int_{[-c,c]^2} |(\Fcal[\zeta_0] - \Fcal[\zeta_n])^{'}(x+y)|^2 dxdy + \int_{[-c,c]^2} |(\Fcal[\zeta_0] - \Fcal[\zeta_n])^{''}(x+y)|^2 dxdy \Bigg )^{1/2}.
\end{multline*}
By making the change of variables $u = x+y$ and $v = y$, we get that there exists a constant $c_d'$ such that
\begin{align*}
     \int_{\Rbb^{2d}} |(\Pbb_0 * \Qbb)(x) - (\Pbb_n * \Qbb)(x)|dx  \leq c_d' \Bigg ( \int_{[-2c,2c]} |(\Fcal[\zeta_0] - \Fcal[\zeta_n])(u)|^2 du  \\
     + \int_{[-2c,2c]} |(\Fcal[\zeta_0] - \Fcal[\zeta_n])^{'}(u)^2 du + \int_{[-2c,2c]} |(\Fcal[\zeta_0] - \Fcal[\zeta_n])^{''}(u)|^2 du \Bigg )^{1/2}.
\end{align*}
Using that for all $u \in \Rbb$, $\Fcal[\zeta_0] - \Fcal[\zeta_n])(u) = \alpha_n \Fcal[P_{K_n}1_{(-1,1)}](u) \Fcal[u_{b_{n}}](u) = \alpha_n \Fcal[P_{K_n}1_{(-1,1)}](u) \Fcal[u](\frac{u}{b_n})$, we have that for $b_n$ large enough, there exists a constant $c_d''$ such that
\begin{align*}
     \int_{\Rbb^{2d}} |(\Pbb_0 * \Qbb)(x) - (\Pbb_n * \Qbb)(x)|dx  \leq c_d'' \alpha_n \Bigg ( \int_{[-2c,2c]} |\Fcal[P_{K_{n}}](u)|^2 du  \\
     + \int_{[-2c,2c]} |\Fcal[P_{K_{n}}]^{'}(u)^2 du + \int_{[-2c,2c]} |\Fcal[P_{K_{n}}]^{''}(u)|^2 du \Bigg )^{1/2}
\end{align*}

Following \cite{MR2319471} proof of Theorem 2, we have the following bounds, for constants $c>0$ and $C>0$
\begin{equation*}
    |\Fcal[P_{K_{n}}](u)| \leq c \left(\frac{C u}{K_n} \right)^{K_{n}} \ , \ |\Fcal[P_{K_{n}}]'(u)| \leq c \left(\frac{C u}{K_n-1} \right)^{K_{n}-1},
\end{equation*}
and
\begin{equation*}
    |\Fcal[P_{K_{n}}]''(u)| \leq c \left(\frac{C u}{K_n-2} \right)^{K_{n}-2}.
\end{equation*}
Finally, we get that there exists $c'$ such that for $K_n$ large enough,
\begin{equation*}
    \int_{\Rbb^{2d}} |(\Pbb_0 * Q)(x) - (\Pbb_n * Q)(x)|dx  \leq c' \alpha_n \left( \frac{C}{K_n-2} \right)^{K_{n}-2}
\end{equation*}

Then \eqref{eq:diff1/n} holds if $\alpha_n \leq 1$ and $K_n$ is chosen, for some large constant $C'$, as

\begin{equation*}
    K_n = C' \left( \frac{\log(n)}{\log\log(n)} \right) .
\end{equation*}
\end{proof}

\subsection{Proof of Proposition \ref{prop:deviation}}
\label{proof:thm:adaptnoise}

Following the proof of Theorem 3.2 of \cite{LCGL2021}, after noticing that all arguments go through with unbounded $\rho_0$, there exists constants $C > 0$ and $n_0$ such that for all $n \geq n_0$,
\begin{equation*}
    \| \widehat{f}_{n,\rho} - f^{\star} \|^2_2  \leq C \max \Bigg \lbrace \exp(-m_{n,\rho}), (2m_{n,\rho}/\rho)^{2m_{n,\rho}/\rho} \exp(-m_{n,\rho}) \| \widehat \Phi_{n,\rho} - \Phi \|^2_{2,\frac{\nu}{2}}, m_{n,\rho}^{-2\beta/\rho} \Bigg \rbrace.
\end{equation*}
By Proposition~\ref{prop:rate:char}, taking and $\delta, \delta''$ such that $(1-\delta)(1-\delta'') > 1/2$ and $x = \log(n)$, we obtain that up to changing the constant $C$, with probability at least $1-2 n^{-1}$,

\begin{equation}
\label{eq:fvitesse}
    \| \widehat{f} - f^{\star} \|^2_2  \leq C \max \Bigg \lbrace \exp(-m_{n,\rho}), (2m_{n,\rho}/\rho)^{2m_{n,\rho}/\rho} \exp(-m_{n,\rho}) \left ( \frac{\log(n)}{n^{1-\delta''}} \right )^{1-\delta}, m_{n,\rho}^{-2\beta/\rho} \Bigg \rbrace.
\end{equation}
By definition, $m_{n,\rho} \leq \frac{\rho}{4} \frac{\log(n)}{\log\log(n)}$, so that on the event where \eqref{eq:fvitesse} is true, there exists $n_0$ such that for $n \geq n_0$,

\begin{equation}
\label{eq:densityproba}
        \| \widehat{f} - f^{\star} \|^2_2  \leq C \max \Bigg \lbrace \exp(-m_{n,\rho}) \left [ 1 \vee n^{1/2} \left ( \frac{\log(n)}{n^{1-\delta''}} \right )^{1-\delta} \right ], m_{n,\rho}^{-2\beta/\rho} \Bigg \rbrace.
\end{equation}
Therefore, given the choice of $m_{n,\rho}$,

\begin{equation*}
\underset{\Qbb\in \Qcal^{(2d)} (\nu,c({\nu}),E)}
        {\sup_{\Pbb_{X} \in \Psi(\rho,S,\beta,c_{\beta}) }}
    \Pbb_{(\Pbb_{r(X)} * \Qbb)^{\otimes n}}\left[ m_{n,\rho}^{2\beta/\rho} \| \widehat{f}_n - f \|^2_{2} > C \right]
        \leq \frac{2}{n},
\end{equation*}
and the Proposition follows.

\newpage

\appendix
\section{Appendix}
\label{section:App}

\subsection{Graphs of the estimators of the characteristic function and density of the signal}
\label{section:App:graphs}

\begin{figure}[ht]
\begin{subfigure}{0.57\textwidth}
\includegraphics[width=\linewidth]{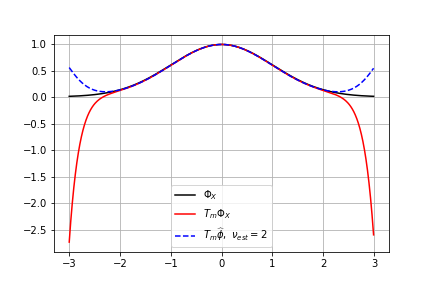}
\caption{Characteristic functions estimation} 
\end{subfigure}\hspace*{\fill}
\begin{subfigure}{0.57\textwidth}
\includegraphics[width=\linewidth]{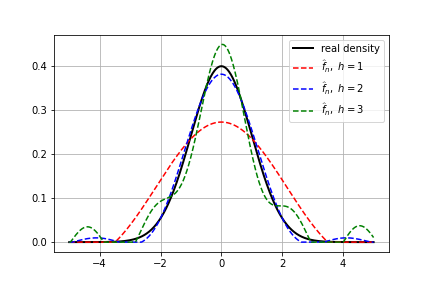}
\caption{Density estimation} 
\end{subfigure}

\caption{$X \sim \mathcal{N}(0,1)$, for $i \in \lbrace 1,2 \rbrace$, $\varepsilon^{(i)} \sim \mathcal{L}(0)$} \label{fig:GaussLaplace}

\end{figure}

\begin{figure}[ht]
\begin{subfigure}{0.57\textwidth}
\includegraphics[width=\linewidth]{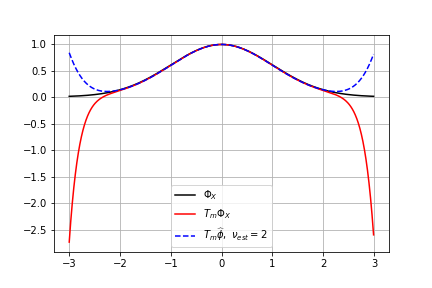}
\caption{Characteristic functions estimation} 
\end{subfigure}\hspace*{\fill}
\begin{subfigure}{0.57\textwidth}
\includegraphics[width=\linewidth]{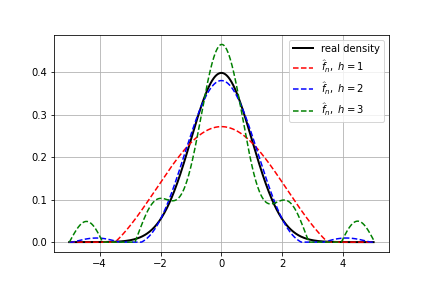}
\caption{Density estimation} 
\end{subfigure}

\caption{$X \sim \mathcal{N}(0,1)$, for $i \in \lbrace 1,2 \rbrace$, $\varepsilon^{(i)} \sim \mathcal{N}(0,1)$} \label{fig:GaussGauss}

\end{figure}

\begin{figure}[ht]
\begin{subfigure}{0.57\textwidth}
\includegraphics[width=\linewidth]{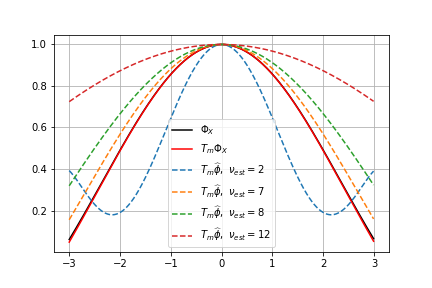}
\caption{$X \sim \beta(2,2)$, for $i \in \lbrace 1,2 \rbrace$, $\varepsilon^{(i)} \sim \mathcal{L}(0)$} 
\end{subfigure}\hspace*{\fill}
\begin{subfigure}{0.57\textwidth}
\includegraphics[width=\linewidth]{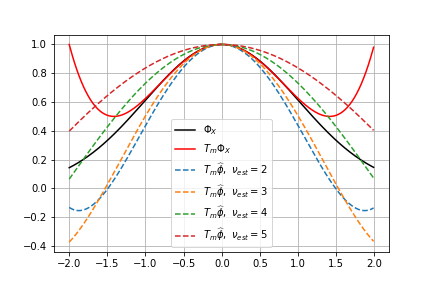}
\caption{$X \sim \mathcal{N}(0,1)$, for $i \in \lbrace 1,2 \rbrace$, $\varepsilon^{(i)} \sim \mathcal{L}(0)$} 
\end{subfigure}

\caption{Characteristic function estimation for different values of $\nu_{\text{est}}$.} \label{fig:OptimalNu}
\end{figure}

\newpage

\begin{figure}[ht]
\begin{subfigure}{0.57\textwidth}
\includegraphics[width=\linewidth]{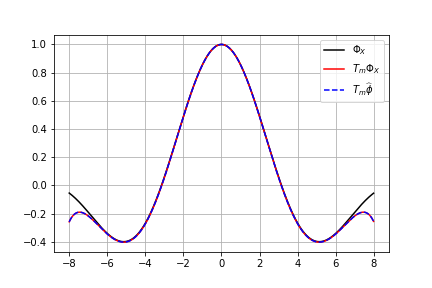}
\caption{Characteristic functions estimation} 
\end{subfigure}\hspace*{\fill}
\begin{subfigure}{0.57\textwidth}
\includegraphics[width=\linewidth]{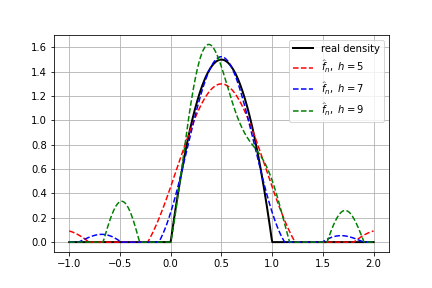}
\caption{Density estimation} 
\end{subfigure}

\caption{$X \sim \beta(2,2)$, for $i \in \lbrace 1,2 \rbrace$, $\varepsilon^{(i)} \sim \mathcal{N}(0,1)$} \label{fig:BetaGauss}

\end{figure}

\begin{figure}[ht]
\begin{subfigure}{0.57\textwidth}
\includegraphics[width=\linewidth]{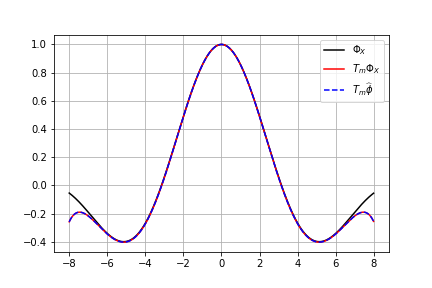}
\caption{Characteristic functions estimation estimation} 
\end{subfigure}\hspace*{\fill}
\begin{subfigure}{0.57\textwidth}
\includegraphics[width=\linewidth]{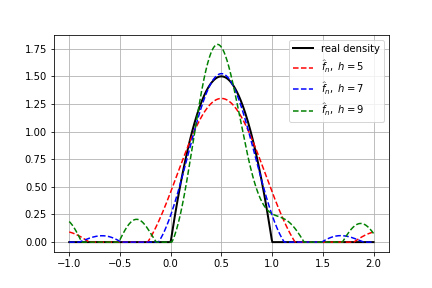}
\caption{Density estimation} 
\end{subfigure}

\caption{$X \sim \beta(2,2)$, for $i \in \lbrace 1,2 \rbrace$, $\varepsilon^{(i)} \sim \mathcal{L}(0)$} \label{fig:BetaLaplace}

\end{figure}

\newpage

\subsection{Loss tables} 
\label{section:App:losses}

The following tables \ref{tab:GaussMixtureGauss1}-\ref{tab:GaussianBiGamma3} are used to find the oracle triplet $(m,\nu_{\text{est}},h)$. They contain the loss $\| \max(\widehat{f}_n, 0) - f\|_2^2$, and are used to compute the oracle triplet $(m,\nu_{\text{est}},h)$. The tables are read as follows: the values of $m$ are on the y-axis, and the values of $\nu_{\text{est}}$ are on the x-axis. For each pair $(m, \nu_{\text{est}})$, there is a sub table that contains the values of the $L_2$ norm for $h \in \{0.25, 0.5, 0.75, 1, 1.25, 1.5, 1.75, 2\}$ as follows.

\begin{center}

\caption{$X \sim \mathcal{N}(0,1), \forall i \in \lbrace 1,2 \rbrace, \  \varepsilon^{(i)} \sim b\Gamma(2,2,3,3), \ m = 15 $}
\label{tab:GaussianBiGammaMC3}
\end{table}
\end{landscape}



\printbibliography 

\end{document}